\numberwithin{equation}{section}
\newtheorem{thm}{Theorem}[section]
\newtheorem{cor}[thm]{Corollary}
\newtheorem{lem}[thm]{Lemma}
\newtheorem{prop}[thm]{Proposition}
\newtheorem{defn}[thm]{Definition}
\newtheorem{rem}[thm]{Remark}
\newcommand{\Ann}{\mbox{Ann}\,}
\newcommand{\vdim}{\mbox{vdim}\,}
\newcommand{\coker}{\mbox{Coker}\,}
\newcommand{\Hom}{\mbox{Hom}\,}
\newcommand{\Ext}{\mbox{Ext}\,}
\newcommand{\Tor}{\mbox{Tor}\,}
\newcommand{\Spec}{\mbox{Spec}\,}
\newcommand{\Max}{\mbox{Max}\,}
\newcommand{\Ass}{\mbox{Ass}\,}
\newcommand{\Supp}{\mbox{Supp}\,}
\newcommand{\depth}{\mbox{depth}\,}
\renewcommand{\dim}{\mbox{dim}\,}
\newcommand{\pd}{\mbox{pd}\,}
\newcommand{\id}{\mbox{id}\,}
\newcommand{\fd}{\mbox{fd}\,}
\newcommand{\h}{\mbox{ht}\,}
\newcommand{\uhom}{{\mathbf R}\Hom}
\renewcommand{\H}{\mbox{H}}
\newcommand{\fa}{\mathfrak{a}}
\newcommand{\fm}{\mathfrak{m}}
\newcommand{\fp}{\mathfrak{p}}
\newcommand{\fq}{\mathfrak{q}}
\newcommand{\C}{C}
\begin{document}
\bibliographystyle{amsplain}


\title[Dual of Bass numbers and dualizing modules]
 {Dual of Bass numbers and dualizing modules}

\bibliographystyle{amsplain}

     \author[M. Rahmani]{Mohammad Rahmani}
     \author[A.- J. Taherizadeh]{Abdoljavad Taherizadeh}

\address{Faculty of Mathematical Sciences and Computer,
Kharazmi University, Tehran, Iran.}

\email{m.rahmani.math@gmail.com}
\email{taheri@khu.ac.ir}

\keywords{Semidualizing modules, dualizing modules, G$_C$-dimension, Bass numbers, dual of Bass numbers, minimal flat resolution, local cohomology.}
\subjclass[2000]{13C05, 13D05, 13D07, 13H10}


\begin{abstract}
Let $R$ be a Noetherian ring and let $C$ be a semidualizing $R$-module. In this paper, we impose various conditions on $C$ to be dualizing.
For example, as a generalization of Xu \cite[Theorem 3.2]{X2}, we show that $C$ is dualizing if and only if for an $R$-module $M$, the necessary and sufficient condition for $M$ to be $C$-injective is that 
$ \pi_i(\fp , M) = 0 $ for all $ \fp \in \Spec(R) $ and all $ i \neq \h(\fp) $, where $ \pi_i $ is the invariant dual to the Bass numbers defined by E.Enochs and J.Xu \cite{EX}. 
\end{abstract}

\maketitle

\bibliographystyle{amsplain}
\section{introduction}

Throughout this paper, $R$ is a commutative Noetherian ring with non-zero identity. A finitely generated $R$-module $C$ is semidualizing if the natural homothety map $ R\longrightarrow \Hom_R(C,C) $ is an isomorphism and $ \Ext^i_R(C,C)=0 $ for all $ i>0 $. Semidualizing modules have been studied by Foxby \cite{F}, Vasconcelos \cite{V}
and Golod \cite{G} who used the name \textit{suitable}
for these modules. Dualizing complexes, introduced by A.Grothendieck, is a powerful tool for investigating cohomology
theories in algebraic geometry. A bounded complex of $R$-modules $ D $ with finitely generated homologies is said to be a dualizing complex for $R$, if  the natural homothety morphism $ R \rightarrow \uhom_R(D,D) $ is quasiisomorphism, and $ \id_R(D) < \infty $. These notion has been extended to semidualizing complexes by L.W. Christensen \cite{C}. A bounded complex of $ R $-modules $C$ with finitely generated homologies is semidualizing for $ R $ if the natural homothety morphism $ R \rightarrow \uhom_R(C,C) $ is quasiisomorphism. He used these notion to define a new homological dimension for complexes, namely G$_C  $-dimension, which is a generalization of Yassemi's G-dimension \cite{Y}. The following, is the translation of a part of  \cite[Proposition 8.4]{C} to the language of modules: \\
\textbf{Theorem 1.} Let $ (R , \fm , k) $ be a Noetherian local ring and let $ C $ be a semidualizing $R$-module. The following are equivalent:
 \begin{itemize}
 	\item[(i)]{$ C $ is dualizing.}
 	\item[(ii)]{G$_C $-$ \dim_R(M) < \infty $ for all finite $R$-modules $M$.}
 	\item[(iii)]{G$_C $-$ \dim_R(k) < \infty $.}
 \end{itemize}
In particular, the above theorem recovers \cite[1.4.9]{C2}. Note that $ k $ is a Cohen-macaulay $R$-module of type 1. R.Takahashi, in \cite[Theorem 2.3]{T}, replaced the condition G-$\dim_R(k) < \infty  $ in \cite[1.4.9]{C2} by weaker conditions and obtained a nice characterization for Gorenstein rings. Indeed, he showed that $R$ is Gorenstein, provided that either $R$ admits an ideal $I$ of finite G-dimension such that $ R/I $ is Gorenstein, or there exists a Cohen-Macaulay $R$-module of type 1 and of finite G-dimension. The following is the main result of section 3, which generalizes Theorem 1 as well as \cite[Theorem 2.3]{T}. See Theorem 3.4 below. \\
   \textbf{Theorem 2.}  Let $(R, \fm)$ be a Noetherian local ring and let $C$ be a semidualizing $R$-module. The following are equivalent:
 \begin{itemize}
 	\item[(i)]{$C$ is dualizing.}
 	\item[(ii)] {There exists an ideal $ \fa $ with G$_C$-$\dim_R(\fa C) < \infty $ such that $ C/ \fa C $ is dualizing for $ R / \fa $.}
 	\item[(iii)]{There exists a Cohen-Macaulay $R$-module $M$ with $ r_R(M) = 1 $ and G$_C$-$\dim_R(M) < \infty $.}
 	\item[(iv)]{$ r_R(C) = 1 $ and there exists a Cohen-Macaulay $R$-module $M$ with G$_C$-$\dim_R(M) < \infty $.}
 \end{itemize}
 
 E.Enochs et al. \cite{BBE}, solved a long standing conjecture about the existence of flat covers. Indeed, they showed that if $R$ is any ring, then all $R$-modules have flat covers. E.Enochs \cite{E2}, determined the structure of flat cotorsion modules. Also, E.Enochs and J.Xu \cite[Definition 1.2]{EX}, defined a new invariant $ \pi_i $, dual to the Bass numbers, for modules related to flat resolutions. J.Xu \cite{X2}, studied the minimal injective resolution of flat $R$-modules and minimal flat resolution of injective $R$-modules. He characterized Gorenstein rings in terms of vanishing of Buss numbers of flat modules, and vanishing of dual of Bass numbers of injective modules. More precisely, the following theorem is \cite[Theorems 2.1 and 3.2]{X2}. \\
   \textbf{Theorem 3.} Let $R$ be a Noetherian ring. The following are equivalent:
  \begin{itemize}
  	\item[(i)]{$ R $ is Gorenstein.}
  	\item[(ii)]{An $R$-module $F$ is flat if and only if $ \mu^i(\fp,F) = 0 $  for all $ \fp \in \Spec(R) $ whenever $ i \neq \h(\fp) $.}
  	\item[(iii)]{An $R$-module $E$ is injective if and only if $ \pi_i(\fp,E) = 0 $ for all $ \fp \in \Spec(R) $ whenever $ i \neq \h(\fp) $.}
  \end{itemize}
  In section 4, we give a generalization of Theorem 3. Indeed, in Theorem 4.3, we prove the following result. \\
     \textbf{Theorem 4.} Let $R$ be a Noetherian ring and let $C$ be a semidualizing $R$-module. The following are equivalent:
     \begin{itemize}
     	\item[(i)]{$ C $ is pointwise dualizing.}
     	\item[(ii)]{An $R$-module $M$ is $C$-injective if and only if $ \pi_i(\fp,M) = 0 $ for all $ \fp \in \Spec(R) $ whenever $ i \neq \h(\fp) $.}
     	\item[(iii)]{An $R$-module $M$ is injective if and only if $ \pi_i(\fp,\Hom_R(C,M)) = 0 $ for all $ \fp \in \Spec(R) $ whenever $ i \neq \h(\fp) $.}
      \end{itemize}
Theorem 4 has several applications. Let $ (R, \fm) $ be a $ d $-dimensional Cohen-Macaulay local ring possessing a canonical module. In this section, we give the structure of the minimal flat resolution of $ \H_{\fm}^d(R) $, the top local cohomology of $R$. More precisely, the following theorem is Corollary 4.7. \\
 \textbf{Theorem 5.}	Let $ (R, \fm) $ be a $d$-dimensional Cohen-Macaulay local ring possessing a canonical module. The minimal flat resolution of $ \H_{\fm}^d(R) $ is of the form \\
 	\centerline{$ 0 \longrightarrow \widehat{R_{\fm}} \longrightarrow \cdots \longrightarrow \underset{\h(\fp) = 1} \prod T_{\fp}  \longrightarrow \underset{\h(\fp) = 0} \prod T_{\fp} \longrightarrow \H_{\fm}^d(R) \longrightarrow 0 $,}
 	in which $ T_{\fp} $ is the completion of a free
 	$R_{\fp}$-module with respect to $ \fp R_{\fp} $-adic topology.  
 	
 In this section, by using the above resolution, we obtain the following isomorphism for a $ d $-dimensional Cohen-Macaulay local ring (See Corollary 4.8).
  	 $$\Tor^R_i(\H^d_{\fm}(R),\H^d_{\fm}(R)) \cong \left\lbrace
  	 \begin{array}{c l}
  	 \H^d_{\fm}(R)\ \ & \text{ \ \ \ \ \ $i=d$,}\\
  	 0\ \   & \text{  \ \ $\text{ \ \ $i \neq d$}$.}
  	 \end{array}
  	 \right.$$\\
\section{preliminaries}

In this section, we recall some definitions and facts which are needed throughout this
paper. By an injective cogenerator, we always mean an injective $R$-module $E$ for which $ \Hom_R(M,E) \neq 0 $ whenever $M$ is a nonzero $R$-module. For an $R$-module $M$, the injective hull of $M$, is always denoted by $E(M)$.

\begin{defn}
	\emph{Let $\mathcal{X} $ be a class of $R$-modules and $M$ an $R$-module. An $\mathcal{X}$-\textit{resolution} of $M$ is a complex of $R$-modules in $\mathcal{X} $ of the form \\
		\centerline{ $X = \ldots \longrightarrow X_n \overset{\partial_n^X} \longrightarrow X_{n-1} \longrightarrow \ldots \longrightarrow X_1 \overset{\partial_1^X}\longrightarrow X_0 \longrightarrow 0$}
		such that $\H_0(X) \cong M$ and $\H_n(X) = 0$ for all $ n \geq 1$.}
	\emph{Also the $ \mathcal{X}$-\textit{projective dimension} of $M$ is the quantity \\
		\centerline{ $ \mathcal{X}$-$\pd_R(M) := \inf \{ \sup \{ n \geq 0 | X_n \neq 0 \} \mid X$ is an $\mathcal{X}$-resolution of $M \}$}.}
	\emph{So that in particular $\mathcal{X}$-$\pd_R(0)= - \infty $. The modules of $\mathcal{X}$-projective dimension zero are precisely the non-zero modules in $\mathcal{X}$. The terms of $\mathcal{X}$-\textit{coresolution} and $\mathcal{X}$-$\id$ are defined dually.}
\end{defn}

\begin{defn}
 \emph{A finitely generated $ R $-module $ C $ is \textit{semidualizing} if it satisfies the following conditions:
\begin{itemize}
             \item[(i)]{The natural homothety map $ R\longrightarrow \Hom_R(C,C) $ is an isomorphism.}
             \item[(ii)]{$ \Ext^i_R(C,C)=0 $ for all $ i>0 $.}
          \end{itemize}}
\end{defn}
For example a finitely generated projective $R$-module of rank 1 is semidualizing. If $R$ is Cohen-Macaulay, then an $R$-module $D$ is dualizing if it is semidualizing and that $\id_R (D) < \infty $ . For example the canonical module of a Cohen-Macaulay local ring, if exists, is dualizing.

\begin{defn}
\emph{Following \cite{HJ}, let $C$ be a semidualizing $R$-module. We set
\begin{itemize}
	\item[]{$\mathcal{F}_C(R) =$ the subcategory of $R$--modules  $C \otimes_R F$ where $F$ is a flat $R$--module.}
	\item[]{ $\mathcal{I}_C(R) =$ the subcategory of $R$--modules  $\Hom_R(C,I) $ where $I$ is an injective $R$--module.}
\end{itemize}
The $R$-modules in $\mathcal{F}_C(R)$ and $\mathcal{I}_C(R)$ are called $C$-\textit{flat} and $C$-\textit{injective}, respectively.  If $ C = R $, then it recovers the classes of flat and injective modules, respectively.
We use the notations $C$-$\fd$ and $C$-$\id$ instead of $\mathcal{F}_C$-$\pd$ and $\mathcal{I}_C$-$\id$, respectively.}
\end{defn}

\begin{prop}\label{B0}
Let $C$ be a semidualizing $R$-module. Then we have the following:
\begin{itemize}
           \item[(i)]{$\emph\Supp (C) = \emph\Spec (R)$, $\emph\dim(C) = \emph\dim(R)$ and $\emph\Ass(C) = \emph\Ass(R)$.}
           \item[(ii)] { If $R \rightarrow S$ is a flat ring homomorphism, then $ C \otimes_R S$ is a semidualizing $S$-module.}
            \item[(iii)]{ If $ x \in R $ is $R$--regular, then $C/ xC$ is a semidualizing $R/ xR$-module.}
             \item[(iv)]{If, in addition, $R$ is local, then $\emph\depth_R (C) = \emph\depth (R) $.}
             \end{itemize}
\end{prop}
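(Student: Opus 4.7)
The plan is to prove the four assertions in the stated order, since each uses the previous one. Part (i) is the structural backbone; (ii) and (iii) are essentially formal consequences once (i) is in hand; and (iv) is a short induction on depth using (iii).

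For (i), I start from the homothety isomorphism $R \cong \Hom_R(C,C)$. Since $C$ is finitely generated, localization commutes with Hom, so $R_\fp \cong \Hom_{R_\fp}(C_\fp, C_\fp)$; in particular $C_\fp = 0$ forces $R_\fp = 0$, giving $\Supp(C) = \Spec(R)$ and hence $\dim(C) = \dim(R)$. For the associated-prime equality I would invoke the classical identity $\Ass_R(\Hom_R(M,N)) = \Supp(M) \cap \Ass(N)$ for finitely generated $M$, applied with $M = N = C$, which yields $\Ass(R) = \Spec(R) \cap \Ass(C) = \Ass(C)$.

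For (ii) the tool is flat base change: because $C$ is finitely generated, the natural maps $\Hom_R(C,C) \otimes_R S \to \Hom_S(C \otimes_R S,\, C \otimes_R S)$ and $\Ext^i_R(C,C) \otimes_R S \to \Ext^i_S(C \otimes_R S,\, C \otimes_R S)$ are isomorphisms, and the semidualizing conditions transfer at once. For (iii), part (i) gives that the $R$-regular element $x$ is also $C$-regular, so a projective resolution $P_\bullet \to C$ over $R$ stays exact after $-\otimes_R R/xR$, producing an $R/xR$-projective resolution of $C/xC$. A short Hom--tensor adjunction then yields the change-of-rings identity $\Ext^i_{R/xR}(C/xC,\, N) \cong \Ext^i_R(C,\, N)$ for every $R/xR$-module $N$. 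Applied to $N = C/xC$, combined with the long exact sequence from $0 \to C \xrightarrow{x} C \to C/xC \to 0$ and the vanishing of $\Ext^{>0}_R(C,C)$, this simultaneously produces $\Hom_{R/xR}(C/xC,\,C/xC) \cong R/xR$ and the required Ext-vanishing.

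Finally, for (iv) I induct on $\depth(R)$. The base case $\depth(R) = 0$ forces $\fm \in \Ass(R) = \Ass(C)$, so $\depth_R(C) = 0$. Otherwise I choose $x \in \fm$ that is $R$-regular, note via (i) that it is $C$-regular, and apply (iii) to make $C/xC$ semidualizing over $R/xR$; the induction hypothesis together with the standard identity $\depth_R(C) = \depth_{R/xR}(C/xC) + 1$ closes the argument. The main obstacle in this scheme is the change-of-rings identity underlying (iii); once the $C$-regularity of $x$ is used to produce an $R/xR$-projective resolution of $C/xC$ and the adjunction is applied, everything else is bookkeeping.
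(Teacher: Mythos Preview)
Your proposal is correct and follows essentially the same approach as the paper: the paper simply asserts that (i)--(iii) ``follow from the definition of semidualizing modules'' and that (iv) is ``an easy induction'' using the fact that $R$-regular elements are $C$-regular via $\Ass(C)=\Ass(R)$. You have supplied exactly the standard details that the paper leaves implicit, so there is nothing to add.
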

\begin{proof}
 The parts (i), (ii) and (iii) follow from the definition of semidualizing modules. For (iv), note that an element of $R$ is $R$-regular if and only if it is $C$-regular since $\Ass (C) = \Ass (R)$. Now an easy induction yields the equality.
\end{proof}

\begin{defn}
	\emph{Let $C$ be a semidualizing $R$-module. A finitely generated $R$-module $M$ is said to be \textit{totally} $C$-\textit{reflexive} if the following conditions are satisfied:
		\begin{itemize}
			\item[(i)]{The natural evaluation map $ M \longrightarrow \Hom_R(\Hom_R(M,C),C) $ is an isomorphism.}
			\item[(ii)]{$ \Ext^i_R(M,C) = 0 = \Ext^i_R(\Hom_R(M,C),C) $ for all $ i>0 $.}
		\end{itemize}
		For an $R$-module $ M $, if there exists an exact sequence
		$ 0 \rightarrow G_n \rightarrow \cdots \rightarrow G_1 \rightarrow G_0 \rightarrow M \rightarrow 0 $,
		of $ R $-modules such that each $ G_i $ is totally $C$-reflexive, then we say that $ M $ has G$_C $-dimension
		at most $ n $, and write G$_C $-$ \dim_R(M) \leq n$.  If there is no shorter such sequence, we set G$_C $-$ \dim_R(M) = n$. Also, if such an integer $ n $ does not exist, then
		we say that $ M $ has infinite G$_C $-dimension, and write G$_C $-$ \dim_R(M) = \infty$.}	
\end{defn}
The next proposition collects basic properties of G$ _C $-dimension. For the proof, see \cite{G}.
\begin{prop}
	Let $ (R , \fm) $ be local, $M$ a finitely generated $R$-module and let $C$ be a semidualizing $R$-module. The following statements hold:
	\begin{itemize}
		\item[(i)]{If $\emph{G}_C\emph{-\dim}_R(M) < \infty$, and $ x \in \fm $ is $M$-regular, then \\
			\centerline{$ \emph{G}_C\emph{-\dim}_R(M) = \emph{G}_C\emph{-\dim}_R(M/xM) - 1$.}
			 If, also, $ x $ is $R$-regular, then \\
			 \centerline{$ \emph{G}_C\emph{-\dim}_R(M) = \emph{G}_{C/xC}\emph{-\dim}_{R/xR}(M/xM) $.}}
		\item[(ii)] {If $\emph{G}_C\emph{-\dim}_R(M) < \infty$ and $ x $ is an $R$-regular element in $ \emph{\Ann}_R(M) $, then \\
			\centerline{$ \emph{G}_C\emph{-\dim}_R(M) = \emph{G}_{C/xC}\emph{-\dim}_{R/xR}(M) + 1 $.}}
		\item[(iii)]{Let $ 0 \rightarrow K \rightarrow L \rightarrow N \rightarrow 0 $ be a short exact sequence of $ R $-modules. If two
			of $ L,K,N $ are of finite $ \emph{G}_C $-dimension, then so is the third.}
		\item[(iv)]{If $\emph{G}_C\emph{-\dim}_R(M) < \infty$, then
			\[\begin{array}{rl}
			\emph{G}_C\emph{-\dim}_R(M) &= \sup \{ i \geq 0 \mid \emph{\Ext}^i_R(M,C) \neq 0 \} \\
			&= \emph{\depth}(R) - \emph{\depth}_R(M).\\
			\end{array}\]}
	\end{itemize}
\end{prop}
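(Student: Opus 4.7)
The plan is to establish the four parts in the order (iii), (iv), (i), (ii), since the later parts rest on the earlier ones. For (iii), I would invoke the standard closure properties of totally $C$-reflexive modules --- closed under extensions, direct summands, and kernels of surjections between totally $C$-reflexive modules --- combined with the Horseshoe Lemma. Given $0 \to K \to L \to N \to 0$ with any two of $K, L, N$ of finite $G_C$-dimension, one splices bounded totally $C$-reflexive resolutions of the two ``known'' modules to obtain a bounded totally $C$-reflexive resolution of the third.

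For (iv), the core of the proposition, I would induct on $g := G_C$-$\dim_R(M)$. The base case $g = 0$ reduces to the claim $\depth_R(M) = \depth(R)$ for every totally $C$-reflexive $M$. The key observation is the embedding $M \hookrightarrow C^n$ for some $n$: choose a surjection $R^n \twoheadrightarrow \Hom_R(M, C)$ and apply $\Hom_R(-, C)$, using $M \cong \Hom_R(\Hom_R(M, C), C)$ together with $\Ext^1_R(\Hom_R(M, C), C) = 0$. Hence $\Ass_R(M) \subseteq \Ass_R(C) = \Ass_R(R)$ by Proposition 2.4(i), so every $R$-regular element is $M$-regular. I would then induct on $\depth(R)$: when $\depth(R) = 0$, the embedding combined with $\fm \in \Ass(R) = \Ass(C)$ gives $\fm \in \Ass(M)$, so $\depth_R(M) = 0$; when $\depth(R) > 0$, pick $x \in \fm$ that is $R$-regular (hence also $M$- and $C$-regular), check via a change-of-rings computation that $M/xM$ is totally $(C/xC)$-reflexive over $R/xR$, and apply the inductive hypothesis. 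The vanishing-of-$\Ext$ half of the base case is immediate. For the induction step $g \geq 1$, choose a short exact sequence $0 \to M' \to G \to M \to 0$ with $G$ totally $C$-reflexive; by (iii), $G_C$-$\dim_R(M') = g - 1$. The long exact sequence of $\Ext_R(-, C)$ collapses (since $\Ext^{\geq 1}_R(G, C) = 0$) to give $\Ext^{i+1}_R(M, C) \cong \Ext^i_R(M', C)$ for $i \geq 1$, and the depth lemma transfers the depth equality from $M'$ to $M$.

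For (i), given $x \in \fm$ that is $M$-regular, the short exact sequence $0 \to M \xrightarrow{x} M \to M/xM \to 0$ combined with (iii) shows $G_C$-$\dim_R(M/xM) < \infty$; applying (iv) twice together with $\depth_R(M/xM) = \depth_R(M) - 1$ yields the shift by one. For the $R/xR$-equality when $x$ is also $R$-regular, I would tensor a totally $C$-reflexive resolution of $M$ with $R/xR$: exactness follows since $x$ is regular on $M$ and on each totally $C$-reflexive term (via the $\Ass$ argument), and each $G_i/xG_i$ is totally $(C/xC)$-reflexive over $R/xR$ by the change-of-rings argument from the base case of (iv). For (ii), if $x \in \Ann_R(M)$ is $R$-regular, the change-of-rings spectral sequence collapses to the isomorphism $\Ext^{i+1}_R(M, C) \cong \Ext^i_{R/xR}(M, C/xC)$ for $i \geq 0$; combined with the $\Ext$-characterization of finite $G_C$-dimension in (iv), this produces the shift by one.

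The main obstacle is the base case of (iv): the equality $\depth_R(M) = \depth(R)$ for totally $C$-reflexive $M$ is the fulcrum of the whole proposition. It rests on two ingredients that must be handled before part (i) can be invoked --- the embedding $M \hookrightarrow C^n$ yielding the $\Ass$-inclusion, and the change-of-rings preservation of total $C$-reflexivity modulo an $R$-regular element --- and the latter is itself a nontrivial computation requiring $\Ass$-inclusions for both $G$ and $\Hom_R(G, C)$ to secure $x$-regularity on $C$, on $G$, and on $\Hom_R(G, C)$. One must therefore sequence the arguments carefully to avoid circularity with (i).
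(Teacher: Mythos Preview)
The paper gives no proof of this proposition; it simply refers the reader to Golod~\cite{G}. Your outline is essentially the standard argument one finds in the literature on $G_C$-dimension, so there is nothing to compare beyond noting that you supply what the paper omits.

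There is, however, a real gap in your base case for (iv). From the embedding $M\hookrightarrow C^{n}$ you correctly obtain $\Ass_R(M)\subseteq\Ass_R(C)=\Ass_R(R)$, but you then assert that when $\depth(R)=0$ ``the embedding combined with $\fm\in\Ass(R)=\Ass(C)$ gives $\fm\in\Ass(M)$''. That inference is backwards: the inclusion of associated primes runs the wrong way for this conclusion, and an embedding $M\hookrightarrow C^{n}$ cannot by itself force $\fm$ into $\Ass_R(M)$. The repair uses the isomorphism you already have, $M\cong\Hom_R(N,C)$ with $N=\Hom_R(M,C)$, together with the identity $\Ass_R(\Hom_R(N,C))=\Supp_R(N)\cap\Ass_R(C)$ for finitely generated $N$: since $N\neq 0$ is finitely generated over the local ring $R$ one has $\fm\in\Supp_R(N)$, and hence $\fm\in\Ass_R(M)$ whenever $\fm\in\Ass_R(C)$. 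A smaller omission occurs in (ii): the change-of-rings isomorphism $\Ext^{i+1}_R(M,C)\cong\Ext^{i}_{R/xR}(M,C/xC)$ delivers the numerical shift only after one already knows that $\text{G}_{C/xC}$-$\dim_{R/xR}(M)<\infty$, and establishing that finiteness requires a separate argument (reducing a $G_C$-resolution modulo $x$ and analysing the resulting four-term exact sequence, or transferring both the $\Ext$-vanishing and the biduality through the change of rings) which you do not sketch.
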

\begin{defn}
	\emph{Let $C$ be a semidualizing $R$-module. The \textit{Auslander class with
			respect to} $C$ is the class $\mathcal{A}_C(R)$ of $R$-modules $M$ such that:
		\begin{itemize}
			\item[(i)]{$\Tor_i^R(C,M) = 0 = \Ext^i_R(C, C \otimes_R M)$ for all $i \geq 1$, and}
			\item[(ii)]{The natural map $ M \rightarrow \Hom_R(C , C \otimes_R M )$ is an isomorphism.}
		\end{itemize}		
		The \textit{Bass class with
			respect to} $C$ is the class $\mathcal{B}_C(R)$ of $R$-modules $M$ such that:
		\begin{itemize}
			\item[(i)]{$\Ext^i_R(C,M) = 0 = \Tor_i^R(C, \Hom_R(C,M))$ for all $i \geq 1$, and}
			\item[(ii)]{The natural map $ C \otimes_R \Hom_R(C,M)) \rightarrow M $ is an isomorphism.}
		\end{itemize}
		The class $\mathcal{A}_C(R)$ contains all $R$-modules of finite projective dimension and those of finite $C$-injective dimension. Also the class $\mathcal{B}_C(R)$ contains all $R$-modules of finite injective dimension and those of finite $C$-projective dimension (see \cite[Corollary 2.9]{TW}). Also, if any two $ R $-modules in a short exact sequence are in $ \mathcal{A}_C(R) $ (resp. $ \mathcal{B}_C(R) $), then so is the third (see \cite{HW}).}
\end{defn}

\begin{prop}\label{A1}
Let $(R, \fm)$ be a local ring and let $C$ be a semidualizing $R$-module.
\begin{itemize}
           \item[(i)]{$C$ is a dualizing $R$-module if and only if $C \otimes_R \widehat{R}$ is a dualizing $\widehat{R}$-module }.
           \item[(ii)]{ Let $x \in \fm$  be $R$-regular. Then $C$ is a dualizing $R$-module if and only if $C/xC$ is a dualizing $R/xR$-module}.
                 \end{itemize}
\end{prop}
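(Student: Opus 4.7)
The plan is to reduce both parts to standard facts about injective dimension of finitely generated modules over local rings. In either case the module appearing on the right-hand side is automatically semidualizing by Proposition~2.4(ii)--(iii), so the only substantive issue is to transfer finiteness of $\id$ across the faithfully flat map $R\to\widehat R$ in (i), and across reduction modulo the regular element $x$ in (ii). The key input I will use is Bass's theorem: for a finitely generated module $M$ over a Noetherian local ring $(R,\fm,k)$,
\[
\id_R(M)=\sup\{\,i\geq 0\mid \Ext^i_R(k,M)\neq 0\,\},
\]
with the convention that the supremum is $\infty$ when the set is unbounded.

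For part (i), set $\widehat C:=C\otimes_R\widehat R$. Flat base change yields $\Ext^i_{\widehat R}(k,\widehat C)\cong\Ext^i_R(k,C)\otimes_R\widehat R$, and because $\Ext^i_R(k,C)$ is a $k$-vector space while $k\otimes_R\widehat R\cong k$, this is isomorphic to $\Ext^i_R(k,C)$ itself. Hence the two families of Ext-groups vanish for exactly the same indices, and Bass's theorem gives $\id_R(C)<\infty$ if and only if $\id_{\widehat R}(\widehat C)<\infty$.

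For part (ii), first note that $x$ is $C$-regular because $\Ass(C)=\Ass(R)$ by Proposition~2.4(i). Rees's theorem, namely $\id_R(N)=\id_{R/xR}(N)+1$ for every $R/xR$-module $N$, reduces the problem to showing that $\id_R(C)<\infty$ is equivalent to $\id_R(C/xC)<\infty$. Applying $\Hom_R(k,-)$ to the short exact sequence $0\to C\xrightarrow{x}C\to C/xC\to 0$ and using that $x\in\fm$ acts as zero on the $k$-vector space $\Ext^i_R(k,C)$, the long exact sequence of Ext breaks into short exact sequences
\[
0\to\Ext^i_R(k,C)\to\Ext^i_R(k,C/xC)\to\Ext^{i+1}_R(k,C)\to 0.
\]
Therefore $\Ext^i_R(k,C)=0$ for all $i\gg 0$ if and only if $\Ext^i_R(k,C/xC)=0$ for all $i\gg 0$, and a second appeal to Bass's theorem finishes the proof.

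The main obstacle is the reverse direction of (ii): pushing finiteness of $\id_{R/xR}(C/xC)$ back up to $R$. The Bass-theorem characterization of finite injective dimension via vanishing of $\Ext^i_R(k,-)$ for large $i$ is precisely what makes this upward passage clean; without it one would have to run a more delicate Nakayama-style argument directly on the Ext groups.
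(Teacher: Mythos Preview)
Your proof is correct. The paper's own proof consists of the single sentence ``Just use the definition of dualizing modules,'' so there is no substantive argument to compare against; you have simply supplied the details that the paper omits, and your route via Bass's formula $\id_R(M)=\sup\{i\mid \Ext^i_R(k,M)\neq 0\}$ together with flat base change (for (i)) and the short exact sequence $0\to C\xrightarrow{x}C\to C/xC\to 0$ (for (ii)) is a standard and clean way to carry this out.

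One small remark: in part (ii) you could bypass the Bass-theorem computation entirely by quoting the well-known formula $\id_R(M)=\id_{R/xR}(M/xM)$ for $x$ regular on both $R$ and $M$ (e.g.\ \cite[Corollary 3.1.15]{He}), which already handles both directions at once; your argument effectively reproves this in the special case needed. Either way the logic is sound.
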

\begin{proof}
Just use the definition of dualizing modules.
\end{proof}

\begin{thm}\label{SQ}
Let $C$ be a semidualizing $R$-module and let $M$ be an $R$-module.

\begin{itemize}
           \item[(i)]{$C$-$\emph{\id}_R(M) = \emph{\id}_R (C \otimes_R M) $ and $\emph{\id}_R(M) =C$-$\emph{\id}_R(\emph{\Hom}_R(C,M))$}.
           \item[(ii)]{$C$-$\emph{\fd}_R(M) = \emph{\fd}_R (\emph{\Hom}_R(C,M))$ and $\emph{\fd}_R(M) =C$-$\emph{\fd}_R(C \otimes_R M) $}.
                 \end{itemize}
\end{thm}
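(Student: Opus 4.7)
The proof proceeds via Foxby equivalence: the adjoint pair $(C \otimes_R -, \Hom_R(C,-))$ restricts to quasi-inverse equivalences between $\mathcal{A}_C(R)$ and $\mathcal{B}_C(R)$. Two memberships underpin the argument---every injective module lies in $\mathcal{B}_C(R)$ and every flat module lies in $\mathcal{A}_C(R)$---and, under the Foxby functors, every $C$-injective module then lies in $\mathcal{A}_C(R)$ and every $C$-flat module lies in $\mathcal{B}_C(R)$. The defining Ext/Tor vanishings in $\mathcal{A}_C(R)$ and $\mathcal{B}_C(R)$ guarantee that the Foxby functors are exact on the corresponding (co)resolutions.

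For part~(i), first equation: from a $C$-injective coresolution
\begin{equation*}
0 \to M \to \Hom_R(C, I^0) \to \cdots \to \Hom_R(C, I^n) \to 0
\end{equation*}
of length $n = C$-$\id_R(M)$, the terms to the right of $M$ all lie in $\mathcal{A}_C(R)$, so the two-of-three property of $\mathcal{A}_C(R)$ (Definition~2.7) places $M$ itself in $\mathcal{A}_C(R)$. Applying $C \otimes_R -$ is then exact and sends each $\Hom_R(C, I^j)$ to $I^j$, yielding $\id_R(C \otimes_R M) \le n$. For the reverse inequality, start from an injective coresolution of $C \otimes_R M$ (whence $C \otimes_R M \in \mathcal{B}_C(R)$), apply $\Hom_R(C,-)$---exact, since higher Ext vanishes both on injectives and on $\mathcal{B}_C$-members---and obtain a length-$n$ $C$-injective coresolution of $\Hom_R(C, C \otimes_R M)$; identifying this with $M$ via the Foxby unit gives $C$-$\id_R(M) \le n$. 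The second equality $\id_R(M) = C$-$\id_R(\Hom_R(C,M))$ is handled symmetrically, by swapping the roles of the two Foxby functors.

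Part~(ii) is the formally dual statement, with flat modules in place of injectives: a flat resolution of $M$ tensored with $C$ becomes a $C$-flat resolution of $C \otimes_R M$ (using flat $\subset \mathcal{A}_C(R)$); a $C$-flat resolution of $M$ becomes, under $\Hom_R(C,-)$, a flat resolution of $\Hom_R(C,M)$ (using $C$-flat $\subset \mathcal{B}_C(R)$); and the reverse directions run the same argument back. The collapsing isomorphisms $\Hom_R(C, C \otimes_R F) \cong F$ and $C \otimes_R \Hom_R(C, C \otimes_R F) \cong C \otimes_R F$ for flat $F$ again come from Foxby.

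The step I expect to be the main obstacle is justifying, in the reverse direction of each equality, the Foxby identification $\Hom_R(C, C \otimes_R M) \cong M$ (respectively $C \otimes_R \Hom_R(C,M) \cong M$). This requires $M$ to already belong to the relevant Foxby class, which is not visibly built into the hypothesis $\id_R(C \otimes_R M) < \infty$ (respectively $\fd_R(\Hom_R(C,M)) < \infty$). The cleanest way to handle this is to argue that finite dimension on the transformed side, together with the memberships of classical injective/flat modules in $\mathcal{B}_C(R)/\mathcal{A}_C(R)$, forces $M$ into the correct class, so that the equivalence of finite dimensions is genuine and the stated equalities hold with the convention that both sides are simultaneously finite (and equal) or simultaneously infinite.
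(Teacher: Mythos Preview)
The paper does not prove this result; it simply cites \cite[Theorem~2.11]{TW} for (i) and \cite[Proposition~5.2]{STWY} for (ii). Your Foxby-equivalence argument is precisely the method used in those references, so your approach is aligned with---and more explicit than---the paper's.

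That said, the obstacle you flag at the end is genuine and your proposed resolution does not close it. From $\id_R(C\otimes_R M)<\infty$ you correctly deduce $C\otimes_R M\in\mathcal{B}_C(R)$, hence $\Hom_R(C,C\otimes_R M)\in\mathcal{A}_C(R)$ carries a $C$-injective coresolution of the right length. But to identify this module with $M$ you need the unit $M\to\Hom_R(C,C\otimes_R M)$ to be an isomorphism, and that is \emph{exactly} the condition $M\in\mathcal{A}_C(R)$ you are trying to establish; nothing in the hypothesis controls $\Tor^R_{\geq 1}(C,M)$ or the unit map. Your hand-wave that ``finite dimension on the transformed side forces $M$ into the correct class'' is circular. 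What your argument does prove unconditionally is the inequality $C\text{-}\id_R(M)\geq\id_R(C\otimes_R M)$ (and its three analogues), with equality once $M$ is assumed to lie in the relevant Foxby class; this is also how the cited sources formulate it. In practice the imprecision is harmless here: every subsequent application of the theorem in the paper is to a module already known to belong to the appropriate class.
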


\begin{proof}
For (i), see \cite[Theorem 2.11]{TW} and for (ii), see \cite[Proposition 5.2]{STWY}.
\end{proof}
\begin{lem}
	Let $C$ be a semidualizing $R$-module, $E$ be an injective cogenerator and $M$ be an $R$-module.
	\item[(i)]{One has $ C\emph{-\id}_R(M) = C\emph{-\fd}_R(\emph{\Hom}_R(M,E)) $.}
	\item[(ii)]{One has $ C\emph{-\fd}_R(M) = C\emph{-\id}_R(\emph{\Hom}_R(M,E)) $.}
\end{lem}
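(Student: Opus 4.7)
The plan is to chain together two ingredients: Theorem \ref{SQ} (which converts $C$-injective and $C$-flat dimensions into ordinary injective and flat dimensions via $C \otimes_R -$ and $\Hom_R(C,-)$) and the classical injective-cogenerator duality, which over the Noetherian ring $R$ gives $\id_R(N) = \fd_R(\Hom_R(N,E))$ and $\fd_R(N) = \id_R(\Hom_R(N,E))$ for every $R$-module $N$. The first of these uses that $\Hom_R(-,E)$ carries injective coresolutions of $N$ to flat resolutions of $\Hom_R(N,E)$, and the second uses the Noetherian fact that $\Hom_R(I,E)$ is flat whenever $I$ is injective. Both equalities should be cited as classical facts at the start.

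For (i), I would begin with Theorem \ref{SQ}(i), writing $C$-$\id_R(M) = \id_R(C \otimes_R M)$, then apply the injective-cogenerator duality to get $\id_R(C \otimes_R M) = \fd_R(\Hom_R(C \otimes_R M, E))$. Tensor-Hom adjunction gives a natural isomorphism $\Hom_R(C \otimes_R M, E) \cong \Hom_R(C, \Hom_R(M,E))$, and hence $\fd_R(\Hom_R(C, \Hom_R(M,E)))$. A final application of Theorem \ref{SQ}(ii), applied to the module $\Hom_R(M,E)$, rewrites this as $C$-$\fd_R(\Hom_R(M,E))$.

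For (ii), I would run essentially the dual chain. Start from Theorem \ref{SQ}(ii): $C$-$\fd_R(M) = \fd_R(\Hom_R(C,M))$. Apply the duality to get $\id_R(\Hom_R(\Hom_R(C,M),E))$. Now use the evaluation isomorphism $\Hom_R(\Hom_R(C,M),E) \cong C \otimes_R \Hom_R(M,E)$, which is natural and is an isomorphism because $C$ is finitely presented (being finitely generated over the Noetherian ring $R$) and $E$ is injective: both sides are obtained by applying an exact functor to a finite presentation $R^{m} \to R^{n} \to C \to 0$, so they agree by the five-lemma after checking the free case where both reduce to $\Hom_R(M,E)^n$. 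Finishing with Theorem \ref{SQ}(i) applied to $\Hom_R(M,E)$ converts $\id_R(C \otimes_R \Hom_R(M,E))$ into $C$-$\id_R(\Hom_R(M,E))$.

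The one step that is not purely formal is the evaluation isomorphism used in (ii); this is the main place where finite generation of $C$ (over a Noetherian ring) and injectivity of $E$ are both genuinely needed. Once this is in hand, both (i) and (ii) collapse to a composition of adjunction, the injective-cogenerator duality, and Theorem \ref{SQ}, with no hypotheses on $M$ beyond being an arbitrary $R$-module.
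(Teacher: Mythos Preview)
Your proposal is correct and follows essentially the same route as the paper: for (i) the paper gives exactly the chain $C\text{-}\id_R(M) = \id_R(C\otimes_R M) = \fd_R(\Hom_R(C\otimes_R M,E)) = \fd_R(\Hom_R(C,\Hom_R(M,E))) = C\text{-}\fd_R(\Hom_R(M,E))$ via Theorem~\ref{SQ} and adjunction, and for (ii) the paper simply says ``similar to (i)'', which your dual chain (using the evaluation isomorphism $\Hom_R(\Hom_R(C,M),E)\cong C\otimes_R\Hom_R(M,E)$ for $C$ finitely presented and $E$ injective) makes explicit.
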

\begin{proof}
(i). We have the following equalities
 \[\begin{array}{rl}
 C\emph{-}\id_R(M) &= \id_R(C \otimes_R M) \\
 &= \fd_R(\Hom_R(C \otimes_R M , E))\\
 &=\fd_R(\Hom_R(C , \Hom_R(M,E))\\
 &=C\emph{-}\fd_R(\Hom_R(M,E)),\\
  \end{array}\]
in which the first equality is from Theorem 2.9(i), and the last one is from Theorem 2.9(ii).

(ii). Is similar to (i).
\end{proof}

\begin{rem}
	\emph{Let $ (R, \fm) $ be a local ring and let $M$ be a finitely generated $R$-module. We use $ \nu_R(M) $ to denote the minimal number of generators of $M$. More precisely, $ \nu_R(M) = \vdim_{R/ \fm}(M \otimes_R R/ \fm) $. It is easy to see that if $ x \in \fm $, then $ \nu_R(M) = \nu_{R/xR}(M/xM) $. In particular, if $ x \in \Ann_R(M) $, then $ \nu_R(M) = \nu_{R/xR}(M) $. Assume that $ \depth_R(M) = n $. The type of $M$, denoted by $ r_R(M) $, is defined to be $ \vdim_{R/ \fm}(\Ext_R^n(R/ \fm , M)) $. If $ x \in \fm $, then $  r_R(M/xM) =  r_{R/xR}(M/xM) $ by \cite[Exercise 1.2.26]{He}. Also, if $ x \in \fm $ is $M$- and $R$-regular, then $  r_R(M) =  r_{R/xR}(M/xM) $ by \cite[Lemma 3.1.16]{He}. Assuma that $C$ is a semidualizing $R$-module. Then $ r_R(C) \mid r_R(R) $. Indeed, by reduction modulo a maximal $ R $-sequence, we can assume that $ \depth_R(C) = 0 = \depth(R) $. Then we have
		 \[\begin{array}{rl}
		 r_R(R) &= \vdim_{R / \fm}\Hom_R(R / \fm ,R) \\
		 &= \vdim_{R / \fm}\Hom_R(R / \fm , \Hom_R(C ,C))\\
		 &=\vdim_{R / \fm}\Hom_R(R / \fm \otimes_R C , C)\\
		 &=\vdim_{R / \fm}\Hom_R(R / \fm \otimes_R C \otimes_{R / \fm} R / \fm , C)\\
		 &=\vdim_{R / \fm}\Hom_{R / \fm}(R / \fm \otimes_R C  , \Hom_R(R / \fm ,C))\\
		 &= \nu_R(C) r_R(C). \\
		 \end{array}\] }
\emph{In particular, if $ r_R(R) = 1 $ (e.g. $R$ is Gorenstein local), then $ \nu_R(C) = 1 $ and then $ C \cong R $.}	
\end{rem}
\begin{defn}
	\emph{Let $M$ be an $R$-module and let $ \mathcal{X} $ be a class of $R$-modules . Following \cite{EJ1}, a $ \mathcal{X} $-$precover$ of $M$ is a homomorphism
		$ \varphi : X \rightarrow M$, with $X \in \mathcal{X}$, such that every homomorphism $Y \rightarrow M$ with $Y \in \mathcal{X}$, factors through $\phi$;
		i.e., the homomorphism \\
		\centerline{$\Hom_R(Y, \varphi): \Hom_R(Y,X) \rightarrow \Hom_R(Y,M)$}
		is surjective for each module $Y$ in $ \mathcal{X} $. A $ \mathcal{X} $-precover $ \varphi : X \rightarrow M$ is a $ \mathcal{X} $-$cover$ if every $ \psi \in \Hom_R(X,X)$ with $\varphi \psi = \varphi$ is an automorphism.}
	\end{defn}
	
	\begin{defn}
		\emph{Following \cite{E2}, an $R$-module $M$ is called  \textit{cotorsion } if $ \Ext^1_R(F,M) = 0 $ for any flat $R$-module F.}
	\end{defn}
		
	\begin{rem}
	\emph{In \cite{BBE}, E. Enochs et al. showed that if $R$ is any ring, then every $R$-module has a flat cover. It is easy to see that flat cover must be surjective. By \cite[Lemma 2.2]{E2}, the kernel of a flat cover is always cotorsion. So that if $ F \rightarrow M $ is flat cover and $M$ is cotorsion, then so is $F$. Therefore for an $R$-module $M$, one can iteratively take
	flat covers to construct a flat resolution of $ M $. Since flat cover is unique up to isomorphism, this resolution is unique up to isomorphism of complexes. Such a resolution is called the minimal flat resolution of $M$. Note that the minimal flat resolution of $M$ is a direct summand of any other flat resolution of $M$. Assume that \\
	\centerline{$ \cdots \rightarrow F_i \rightarrow \cdots \rightarrow F_1 \rightarrow F_0 \rightarrow M \rightarrow 0 $,}
	is the minimal flat resolution of $M$. Then $F_i$ is cotorsion for all $ i \geq 1$. If, in addition, $M$ is cotorsion, then all the flat modules in the minimal flat resolution of $M$ are cotorsion. E. Enochs \cite{E2}, determined the structure of flat cotorsion modules. He showed that if $F$ is flat and cotorsion, then $ F \cong \underset{\fp} \prod T_{\fp} $ where $ T_{\fp} $ is the completion of a free
	$R_{\fp}$-module with respect to $ \fp R_{\fp} $-adic topology. So that we can determine the structure of the minimal flat resolution of cotorsion modules. } 	
	\end{rem}
	\begin{defn}
	\emph{Let $M$ be a cotorsion $R$-module and let \\
			\centerline{$ \cdots \rightarrow F_i \rightarrow \cdots \rightarrow F_1 \rightarrow F_0 \rightarrow M \rightarrow 0 $,}
			be the minimal flat resolution of $M$. Following \cite{EX}, for a prime ideal $ \fp $ of $R$ and an integer $ i \geq 0 $, the invariant $ \pi_i(\fp , M) $ is defined to be the cardinality
			of the basis of a free $ R_{\fp} $-module whose completion is $ T_{\fp} $ in the product
			$  F_i \cong \underset{\fp} \prod T_{\fp} $. By \cite[theorem 2.2]{EX}, for each $ i \geq 0 $, \\
			\centerline{$ \pi_i(\fp , M) = \vdim_{R_{\fp} / {\fp} R_{\fp}} \Tor^{R_{\fp}}_i \big(R_{\fp} / {\fp} R_{\fp} , \Hom_R(R_{\fp} , M) \big) $.}}
	\end{defn}
	\begin{rem}\label{A1}
		\emph{Let $M$ be a finitely generated $R$-module. There are isomorphisms\\
			\[\begin{array}{rl}
			\Hom_R(M,E(R/ \fp)) &\cong \Hom_R(M,E(R/ \fp) \otimes_R R_{\fp})\\
			&\cong \Hom_R(M,E(R/ \fp)) \otimes_R R_{\fp}\\
			&\cong \Hom_{R_{\fp}} (M_{\fp} , E_{R_{\fp}}(R_{\fp}/ \fp R_{\fp})),\\
			\end{array}\]
			where the the first isomorphism holds because $E(R/ \fp) \cong E_{R_{\fp}}(R_{\fp}/ \fp R_{\fp})$, and the second isomorphism is tensor-evaluation \cite[Theorem 3.2.14]{EJ1}.}
	\end{rem}
\section{Finiteness of G$ _C $-dimension}

Throughout this section, $C$ is a semidualizing $R$-module. We begin with three lemmas that are needed for the main result of this section. It is well-known that a local ring over which there exists a non-zero finitely generated injective module, must be Artinian. Our first lemma generalizes this fact by replacing the injectivity condition with weaker assumption.
\begin{lem}\label{A2}
Let $ (R, \fm) $ be local and let $M$ be a finitely generated $R$-module with $\emph{\depth}(M) = 0$. If $\emph{\Ext}_R^1(R/\fm , M) = 0 $, then $R$ is Artinian. In particular, $M$ is injective.
\end{lem}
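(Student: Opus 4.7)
My plan is to reduce to the case in which $M$ has finite length, then embed $M$ into its injective hull $E(R/\fm)^r$ and use the hypothesis $\Ext_R^1(R/\fm, M) = 0$ to force the embedding to be an equality. This will make $E(R/\fm)$ finitely generated, which by Matlis theory characterizes $R$ as Artinian.

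First I would pass from $M$ to its $\fm$-torsion submodule $\Gamma := \Gamma_\fm(M)$. Since $\depth M = 0$ means $\fm \in \Ass_R(M)$, we have $\Gamma \neq 0$, and being a finitely generated $\fm$-torsion module, $\Gamma$ has finite length. From the short exact sequence $0 \to \Gamma \to M \to M/\Gamma \to 0$, applying $\Hom_R(R/\fm, -)$ and noting that $\Hom_R(R/\fm, M/\Gamma) = 0$ (either $M/\Gamma = 0$, or $\fm \notin \Ass_R(M/\Gamma)$), I deduce that $\Ext_R^1(R/\fm, \Gamma)$ injects into $\Ext_R^1(R/\fm, M) = 0$. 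So $\Gamma$ satisfies the hypotheses of the lemma and in addition has finite length.

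Next, set $r := \dim_{R/\fm} \Hom_R(R/\fm, \Gamma) = r_R(\Gamma)$ and embed $\Gamma$ into its injective hull $E(R/\fm)^r$, with cokernel $Q$. Applying $\Hom_R(R/\fm, -)$ to $0 \to \Gamma \to E(R/\fm)^r \to Q \to 0$, essentiality of the injective hull makes the induced map $\Hom_R(R/\fm, \Gamma) \to \Hom_R(R/\fm, E(R/\fm)^r)$ an isomorphism of $r$-dimensional $R/\fm$-vector spaces; together with $\Ext_R^1(R/\fm, \Gamma) = 0$, this yields $\Hom_R(R/\fm, Q) = 0$. But $Q$ is a quotient of the $\fm$-torsion module $E(R/\fm)^r$, hence itself $\fm$-torsion, and any nonzero $\fm$-torsion module carries a nonzero socle. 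Therefore $Q = 0$ and $\Gamma \cong E(R/\fm)^r$.

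Since $\Gamma$ is a submodule of $M$, it is finitely generated, forcing $E(R/\fm)$ to be finitely generated, which by Matlis theory means $R$ is Artinian. Consequently $\fm$ is nilpotent, every finitely generated $R$-module equals its own $\fm$-torsion submodule, and in particular $M = \Gamma \cong E(R/\fm)^r$, which is injective. The step I expect to be the most delicate is verifying $Q = 0$: it relies on the fact that $E(R/\fm)$ is always $\fm$-torsion regardless of whether $R$ is Artinian, so that the vanishing of the socle of $Q$ really does collapse $Q$ to zero, with no circularity in the reasoning.
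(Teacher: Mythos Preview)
Your proof is correct and takes a genuinely different route from the paper's. The paper argues by contradiction that $\dim(R)=0$: from $\Ext^1_R(R/\fm,M)=0$ one gets $\Ext^1_R(N,M)=0$ for every finite-length $N$, and an induction on length then yields $\ell_R(\Hom_R(R/\fm^i,M))=\ell_R(R/\fm^i)\cdot\ell_R(\Hom_R(R/\fm,M))$; if $\dim(R)>0$ the right-hand side is unbounded in $i$, while the left-hand side sits in the ascending chain $\{0:_M\fm^i\}$ inside the Noetherian module $M$ and must stabilize. Your argument instead isolates the finite-length submodule $\Gamma=\Gamma_{\fm}(M)$, transfers the vanishing of $\Ext^1_R(R/\fm,-)$ to it, and then shows directly that the essential embedding $\Gamma\hookrightarrow E(R/\fm)^r$ is an equality by killing the cokernel via its socle. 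The paper's approach is more elementary in that it only uses lengths and the Noetherian property, never invoking the structure of $E(R/\fm)$ or Matlis theory; your approach is more structural and yields a little extra information, namely an explicit identification $M\cong E(R/\fm)^r$ rather than just the abstract injectivity deduced at the end of the paper's proof.
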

\begin{proof}
We show that $ \dim(R) = 0 $. Assume, on the contrary, that $ \dim(R) > 0 $. Note that if $N$ is an $R$-module of finite length, then by using a composition series for $N$ in conjunction with the assumption, we have 
$ \Ext^1_R(N,M) = 0 $. Now an easy induction on $ \ell_R(N) $ yields the equality $ \ell_R(\Hom_R(N,M)) = \ell_R(N) \ell_R(\Hom_R(R/ \fm,M))$. Next, note that $ \ell_R(R/ \fm^i) < \infty $ for any $ i \geq 1 $, and that the sequence $ \{\ell_R(R/ \fm^i)\}^{\infty}_{i=1} $ is not bounded since $ \fm^i \neq \fm^{i+1} $ for any $ i \geq 1 $. Hence $  \{\ell_R(\Hom_R(R/ \fm^i,M))\}^{\infty}_{i=1}  $  is not bounded. But 
$ 0:_M \fm \subseteq 0:_M \fm^2 \subseteq \cdots $ is a chain of submodules of $M$, and hence is eventually stationary. This is a contradiction. Therefore $R$ is Artinian. Finally, the assumption $ \Ext^1_R(R/ \fm,M) = 0 $ implies that $M$ is injective.
\end{proof}


\begin{lem}\label{A2}
Let $ (R, \fm) $ be a local ring and let $M$ be a Cohen-Macaulay $R$-module with \emph{G}$ _C $\emph{- \dim}$_R(M) < \infty $. Then $ r_R(C) \mid r_R(M) $.	
\end{lem}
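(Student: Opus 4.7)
The plan is to induct on $\depth(R)$, exploiting the Auslander--Bridger equality G$_C$-$\dim_R(M) = \depth(R) - \depth_R(M)$ from Proposition 2.6(iv). For the base case $\depth(R) = 0$, this forces $\depth_R(M) = 0$ and G$_C$-$\dim_R(M) = 0$, so $M$ is totally $C$-reflexive. Setting $N := \Hom_R(M,C)$ and using $M \cong \Hom_R(N,C)$ together with Hom-tensor adjunction would give
$$ \Hom_R(k, M) \cong \Hom_R(k \otimes_R N , C) \cong \Hom_R(N/\fm N , C) \cong \Hom_R(k,C)^{\nu_R(N)}, $$
where the last step uses $N/\fm N \cong k^{\nu_R(N)}$. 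Taking $k$-dimensions, and noting that Proposition 2.4(iv) gives $\depth_R(C) = 0$ so $r_R(C) = \vdim_k \Hom_R(k,C)$, I obtain $r_R(M) = \nu_R(N)\, r_R(C)$, which is stronger than the claimed divisibility.

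For the inductive step, assume $\depth(R) > 0$ and split on $\depth_R(M)$. If $\depth_R(M) > 0$, then $\fm \notin \Ass_R(M)$, and since $\depth(R) \geq \depth_R(M) > 0$ also $\fm \notin \Ass(R)$. Prime avoidance produces $x \in \fm$ that is simultaneously $R$- and $M$-regular. Proposition 2.6(i) then delivers G$_{C/xC}$-$\dim_{R/xR}(M/xM) < \infty$, and $M/xM$ remains Cohen--Macaulay over $R/xR$. By Remark 2.12, $r_R(M) = r_{R/xR}(M/xM)$, and because $\Ass(C) = \Ass(R)$ makes $x$ also $C$-regular, the same remark yields $r_R(C) = r_{R/xR}(C/xC)$. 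Since $\depth(R/xR) = \depth(R) - 1$, the inductive hypothesis closes this subcase.

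If instead $\depth_R(M) = 0$, then $M$ is Cohen--Macaulay of dimension zero and hence of finite length, so $\Ann_R(M)$ is $\fm$-primary. Because $\fm \notin \Ass(R)$, prime avoidance applied to $\Ann_R(M)$ versus the finitely many associated primes of $R$ yields an $R$-regular element $x \in \Ann_R(M)$. Proposition 2.6(ii) gives G$_{C/xC}$-$\dim_{R/xR}(M) < \infty$, and $xM = 0$ lets $M$ survive as an $R/xR$-module of the same length and type (Remark 2.12, since $M/xM = M$), still Cohen--Macaulay of depth zero; the inductive hypothesis again applies.

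The main obstacle is almost entirely bookkeeping: matching the two inductive reductions to the correct parts of Proposition 2.6 (regular-on-both versus annihilator element) and to the appropriate clauses of Remark 2.12, and then verifying at each step that the type and the Cohen--Macaulay property transfer cleanly. The genuine content sits in the base case, where total $C$-reflexivity cleanly factors $r_R(M)$ as $\nu_R(\Hom_R(M,C))$ copies of $r_R(C)$.
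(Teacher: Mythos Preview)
Your proposal is correct and follows essentially the same route as the paper: induction on $\depth(R)$, the base case handled via total $C$-reflexivity and Hom--tensor adjunction to obtain $r_R(M) = \nu_R(\Hom_R(M,C))\, r_R(C)$, and the inductive step split into the two cases $\depth_R(M)=0$ and $\depth_R(M)>0$ using Proposition~2.6(ii) and~(i) respectively. The only cosmetic differences are the order of the two subcases and the slightly more explicit bookkeeping you do with Remark~2.12.
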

\begin{proof}
 We use induction on $ n = \depth(R) $. If $ n = 0 $, then by Proposition 2.6(iv), we have G$ _C $-$ \dim_R(M) = 0$, and hence there is an isomorphism $ M \cong \Hom_R(\Hom_R(M ,C) , C) $, and the equalities $  \depth_R(C) = 0 =  \depth_R(M)$. Hence we have
 \[\begin{array}{rl}
 r_R(M) &= \vdim_{R / \fm}\Hom_R(R / \fm ,M) \\
 &= \vdim_{R / \fm}\Hom_R(R / \fm , \Hom_R(\Hom_R(M ,C) , C))\\
 &=\vdim_{R / \fm}\Hom_R(R / \fm \otimes_R \Hom_R(M ,C) , C)\\
 &=\vdim_{R / \fm}\Hom_R(R / \fm \otimes_R \Hom_R(M ,C) \otimes_{R / \fm} R / \fm , C)\\
 &=\vdim_{R / \fm}\Hom_{R / \fm}(R / \fm \otimes_R \Hom_R(M ,C)  , \Hom_R(R / \fm ,C))\\
 &= \nu_R(\Hom_R(M,C)) r_R(C). \\
 \end{array}\]
 Therefore $ r_R(C) \mid r_R(M) $. Now, assume inductively that $ n > 0 $. We consider two cases: 
 
 \textbf{Case 1.} If $ \depth_R(M) = 0 $, then $M$ is of finite length since it is Cohen-Macaulay. Hence we can take an $R$-regular element $x$ such that $xM = 0$. Set $ \overline{(-)} = (-) \otimes_R R/xR$. Then by Proposition 2.6(ii), we have G$_{\overline{C}}$-$\dim_{\overline{R}}(M) < \infty $. Also, note that $M$ is a Cohen-Macaulay $ \overline{R} $-module. Hence by induction hypothesis we have $ r_{\overline{R}}({\overline{C}}) \mid r_{\overline{R}}(M) $. Thus $ r_R(C) \mid r_R(M) $.

 \textbf{Case 2.} If $ \depth_R(M) > 0 $, then we can take an element $ y \in \fm $ to be $M$- and $R$-regular.
 Set  $ \overline{(-)} = (-) \otimes_R R/yR$. Now $ \overline{M} $ is a Cohen-Macaulay $ \overline{R} $-module, and that \\
 \centerline{G$_{\overline{C}} $-$ \dim_{\overline{R}}(\overline{M}) =$ G$ _C $-$ \dim_R(M) < \infty$,}
 by Proposition 2.6(i). Therefore, by induction hypothesis, we have $ r_{\overline{R}}({\overline{C}}) \mid r_{\overline{R}}(\overline{M}) $, whence $ r_R(C) \mid r_R(M) $. This complete the inductive step.
\end{proof}

\begin{lem}\label{A2}
Let $ (R, \fm) $ be local and that $ r_R(C) = 1 $. If there exists a totally $ C $-reflexive $R$-module of finite length, then $ C $ is dualizing.
\end{lem}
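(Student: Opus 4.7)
The plan is to show $\id_R C = 0$, from which $C$ is dualizing. The vehicle is Lemma 3.1 applied to $C$: once we verify that $C$ is a finitely generated $R$-module of depth zero with $\Ext^1_R(R/\fm,C)=0$, Lemma 3.1 produces $R$ Artinian and $C$ injective. The depth input is immediate: $M$ totally $C$-reflexive gives $\text{G}_C$-$\dim_R M=0$, and $\ell_R(M)<\infty$ gives $\depth_R M=0$, so Proposition 2.6(iv) yields $\depth R = \depth_R M + \text{G}_C$-$\dim_R M = 0$, and then Proposition 2.4(iv) gives $\depth_R C=0$. The hypothesis $r_R(C)=1$ moreover identifies $\Hom_R(R/\fm,C)\cong R/\fm$ as $R$-modules, since both sides are one-dimensional $k$-vector spaces annihilated by $\fm$; this is the key combinatorial input.

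The substantive step is to prove $\Ext^1_R(R/\fm,C)=0$, which I will approach by induction on $\ell_R(M)$. The base case $\ell_R(M)=1$ is clean: $M\cong R/\fm$ is itself totally $C$-reflexive, so $\Ext^i_R(R/\fm,C)=0$ for all $i\geq 1$ directly by definition of total $C$-reflexivity. For $\ell_R(M)\geq 2$ the strategy is to exhibit a totally $C$-reflexive module of strictly smaller length and invoke the inductive hypothesis. One benign sub-case is when $\soc M\not\subseteq\fm M$: in that situation a copy of $R/\fm$ splits off $M$ as a direct summand and is, being a direct summand of a totally $C$-reflexive module, itself totally $C$-reflexive.

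The principal obstacle is the complementary sub-case $\soc M\subseteq\fm M$, which is genuinely non-empty (for instance $M\cong R$ over $k[x]/(x^n)$ with $n\geq 2$). Here the plan is to exploit the $C$-duality: $N:=\Hom_R(M,C)$ is again totally $C$-reflexive of finite length, and Lemma 3.2 in depth zero gives $\nu_R(N)=r_R(M)$ and $r_R(N)=\nu_R(M)$. Applying $\Hom_R(-,C)$ to $0\to\fm M\to M\to M/\fm M\to 0$ and using $\Hom_R(M/\fm M,C)\cong (R/\fm)^{\nu_R(M)}$ produces a four-term exact sequence in which $\Ext^1_R(R/\fm,C)^{\nu_R(M)}$ appears as the cokernel of a socle-level map between $N$ and $\Hom_R(\fm M,C)$. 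Combining this with the involution $M\leftrightarrow N$ and, if needed, replacing $M$ by its first syzygy $\Omega M$ (which is again totally $C$-reflexive), I would try to descend to the base case. The main difficulty is ensuring that this descent strictly decreases the length at every step; pinning down this reduction cleanly in the non-split case is the crux of the argument.
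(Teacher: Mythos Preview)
Your reduction to showing $\Ext^1_R(R/\fm, C) = 0$ and then invoking Lemma~3.1 is exactly the paper's strategy, and your depth computations and the identification $\Hom_R(R/\fm, C) \cong R/\fm$ are correct.

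The gap is precisely where you flag it: the case $(0:_M \fm) \subseteq \fm M$ of your induction. There is no evident mechanism for manufacturing a totally $C$-reflexive module of strictly smaller length from $M$ here. The dual $N = \Hom_R(M,C)$ carries no a~priori length bound in a direct argument: indeed, if $C$ \emph{were} dualizing---which is what you are trying to prove---then $\ell_R(N) = \ell_R(M)$, so $N$ cannot serve as a descent step without already assuming something about $\Ext^1_R(R/\fm,C)$. Your four-term sequence from $0\to\fm M\to M\to M/\fm M\to 0$ does exhibit $\Ext^1_R(R/\fm,C)^{\nu_R(M)}$ as a cokernel, but extracting a length drop from it requires controlling $\ell_R(\Hom_R(\fm M, C))$, and $\fm M$ is not known to be totally $C$-reflexive, so the recursion stalls. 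The syzygy idea has the same defect.

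The paper dissolves the difficulty by arguing by contradiction rather than by direct induction. Assume $\Ext^1_R(R/\fm, C) \neq 0$, take a composition series $0 = M_0 \subset \cdots \subset M_r = M$, and apply $\Hom_R(-,C)$ to each layer $0 \to M_{i-1} \to M_i \to R/\fm \to 0$. Each step adds at most $1$ to $\ell_R(\Hom_R(M_i,C))$, and at the top step the vanishing $\Ext^1_R(M,C)=0$ forces the nonzero $\Ext^1_R(R/\fm,C)$ to be absorbed, so that step adds strictly less than $1$. Hence $\ell_R(\Hom_R(M,C)) < \ell_R(M)$. Running the same inequality with $\Hom_R(M,C)$ in place of $M$ and invoking biduality yields $\ell_R(M) < \ell_R(M)$. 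The two ingredients you were reaching for---the $C$-dual and the biduality involution---are exactly what the paper uses; what makes them click is the composition-series filtration (rather than the radical filtration) together with the contradiction framing, which is what licenses the strict inequality.
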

\begin{proof}
Assume that $M$ is a finite length $ C $-reflexive $R$-module. Then $ \depth_R(M) = 0 $, and hence $ \depth(R) = $ G$_C $-$ \dim(M) = 0 $ by Proposition 2.6(iv). Therefore, we have $ \depth_R(C) = 0 $ by Proposition 2.4(iv). Now assume, on the contrary, that $ C $ is not dualizing. Hence, by Lemma 3.1, we have $ \Ext_R^1(R/ \fm , C) \neq 0 $. Let \\
\centerline{$ 0 = M_0 \subset M_1 \subset \cdots \subset M_r = M $,}
be a composition series for $M$. Thus the factors are all isomorphic to $R / \fm$, and we have exact sequences \\
\centerline{$ 0 \rightarrow M_{i-1} \rightarrow M_i \rightarrow R/ \fm \rightarrow 0 $,}
for all $ 1 \leq i \leq r $. Applying the functor $\Hom_R(-,C)$, we get the exact sequence \\
\centerline{$ 0 \rightarrow \Hom_R(R/ \fm , C)  \rightarrow \Hom_R(M_i,C) \rightarrow \Hom_R(M_{i-1} ,C) $,}
for each $ 1 \leq i \leq r-1 $. Now since $ \depth_R(C) = 0 $ and $ r_R(C) = 1 $, we have $ \Hom_R(R/ \fm , C) \cong R/ \fm $. Hence we have the inequality $ \ell_R(\Hom_R(M_i,C)) \leq \ell_R(\Hom_R(M_{i-1} ,C)) + 1 $ for each $ 1 \leq i \leq r-1 $. On the other hand, application of the  functor $\Hom_R(-,C)$ on the exact sequence $ 0 \rightarrow M_{r-1} \rightarrow M \rightarrow R/ \fm \rightarrow 0 $, yields an exact sequence \\
\centerline{$ 0 \rightarrow \Hom_R(R/ \fm , C)  \rightarrow \Hom_R(M ,C) \rightarrow \Hom_R(M_{r-1} ,C) $}
\centerline{$ \rightarrow \Ext^1_R(R/ \fm , C) \rightarrow \Ext^1_R(M , C) = 0 $.}
Therefore  $ \ell_R(\Hom_R(M,C)) = \ell_R(\Hom_R(M_{r-1} ,C)) + 1 - \ell_R(\Ext^1_R(R/ \fm , C)) $. But since $ \ell_R(\Ext^1_R(R/ \fm , C)) > 0 $, we have 
\[\begin{array}{rl}
\ell_R(\Hom_R(M,C)) &< \ell_R(\Hom_R(M_{r-1} ,C)) + 1\\
&\leq \ell_R(\Hom_R(M_{r-2} ,C)) + 2 \\
&\leq \cdots\\
&\leq \ell_R(\Hom_R(M_0 ,C)) + r \\
&= r \\
&= \ell_R(M). \\
\end{array}\]
Now since $ \Hom_R(M,C) $ is again a totally $C$-reflexive $R$-module of finite length, the same argument shows that $ \ell_R \big(\Hom_R(\Hom_R(M ,C) , C) \big) \leq \ell_R(\Hom_R(M ,C)) $. But since $M$ is totally $C$-reflexive, we have $ M \cong \Hom_R(\Hom_R(M ,C) , C) $, which implies that $ \ell_R(M) < \ell_R(M) $, a contradiction. Hence $C$ is dualizing.
\end{proof}

The following theorem is a generalization of \cite[Theorem 2.3]{T}.

\begin{thm}\label{SQ}
Let $(R, \fm)$ be local. The following are equivalent:
\begin{itemize}
           \item[(i)]{$C$ is dualizing.}
           \item[(ii)] {There exists an ideal $ \fa $ with $ \emph{G}_C\emph{-dim}_R(\fa C) < \infty $ such that $ C/ \fa C $ is dualizing for $ R / \fa $.}
            \item[(iii)]{There exists a Cohen-Macaulay $R$-module $M$ with $ r_R(M) = 1 $ and $ \emph{G}_C\emph{-dim}_R(M) < \infty $.}
             \item[(iv)]{$ r_R(C) = 1 $ and there exists a  Cohen-Macaulay $R$-module $M$ of finite $ \emph{G}_C\emph{-}$dimension.}
                 \end{itemize}
\end{thm}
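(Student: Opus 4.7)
The plan is to prove $(\mathrm{i}) \Rightarrow (\mathrm{ii}),(\mathrm{iii}),(\mathrm{iv})$ at once by the trivial choices $\fa=0$ (for (ii)) and $M=C$ (for (iii) and (iv), noting that a dualizing module is maximal Cohen-Macaulay of type $1$ with G$_C$-dimension $0$), then derive $(\mathrm{iii}) \Rightarrow (\mathrm{iv})$ directly from Lemma 3.2, which yields $r_R(C)\mid r_R(M)=1$. What remains are the two substantive implications $(\mathrm{ii}) \Rightarrow (\mathrm{iii})$ and $(\mathrm{iv}) \Rightarrow (\mathrm{i})$.

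For $(\mathrm{iv}) \Rightarrow (\mathrm{i})$ I would argue by induction on $n=\depth(R)$. When $n=0$, Proposition 2.6(iv) forces $\mbox{G}_C$-$\dim_R(M)=0$, so $M$ is totally $C$-reflexive; Cohen-Macaulayness gives $\dim_R(M)=0$, hence $M$ has finite length, and with $r_R(C)=1$ in hand Lemma 3.3 concludes that $C$ is dualizing. When $n>0$, choose an element $y\in\fm$ to mod out: if $\depth_R(M)>0$, take $y$ to be both $R$- and $M$-regular and invoke Proposition 2.6(i); if $\depth_R(M)=0$, then $\dim_R(M)=0$, so $M$ has finite length and $\Ann_R(M)$ is $\fm$-primary, allowing a suitable power of any $R$-regular element of $\fm$ to be chosen inside $\Ann_R(M)$, whence Proposition 2.6(ii) applies. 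Setting $\overline{R}=R/yR$ and $\overline{C}=C/yC$, Proposition 2.4(iii) says that $\overline{C}$ is semidualizing over $\overline{R}$, Remark 2.10 gives $r_{\overline{R}}(\overline{C})=r_R(C)=1$, and the relevant quotient of $M$ is Cohen-Macaulay over $\overline{R}$ of finite $\mbox{G}_{\overline{C}}$-dimension; since $\depth(\overline{R})=n-1$, the induction hypothesis delivers $\overline{C}$ dualizing over $\overline{R}$, and Proposition 2.8(ii) lifts this to $C$ being dualizing over $R$.

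For $(\mathrm{ii}) \Rightarrow (\mathrm{iii})$, the short exact sequence $0 \to \fa C \to C \to C/\fa C \to 0$ together with $\mbox{G}_C$-$\dim_R(C)=0$ forces $\mbox{G}_C$-$\dim_R(C/\fa C)<\infty$ by Proposition 2.6(iii). Since $C/\fa C$ is dualizing over $R/\fa$, the ring $R/\fa$ is Cohen-Macaulay and $C/\fa C$ is maximal Cohen-Macaulay over it, so I can select a $(C/\fa C)$-regular system of parameters $x_1,\ldots,x_t$ for $R/\fa$; lifting these to $R$ and setting $\fb=\fa+(x_1,\ldots,x_t)$, the quotient $C/\fb C$ is dualizing over the Artinian local ring $R/\fb$, so $r_{R/\fb}(C/\fb C)=1$. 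Iterating Proposition 2.6(i) yields $\mbox{G}_C$-$\dim_R(C/\fb C)=\mbox{G}_C$-$\dim_R(C/\fa C)+t<\infty$, and since $C/\fb C$ has finite length over $R$ it is Cohen-Macaulay of depth zero, so
\[ r_R(C/\fb C) = \vdim_{R/\fm} \Hom_R(R/\fm, C/\fb C) = \vdim_{R/\fm} \Hom_{R/\fb}(R/\fm, C/\fb C) = 1, \]
and $M:=C/\fb C$ witnesses (iii).

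The principal obstacle is orchestrating the reductions cleanly. In $(\mathrm{iv}) \Rightarrow (\mathrm{i})$ one has to verify that an $R$-regular element in $\Ann_R(M)$ can be produced when $\depth_R(M)=0$, and that all four pieces of hypothesis descend together modulo $y$ so that the inductive step goes through. The type-descent identification used in $(\mathrm{ii}) \Rightarrow (\mathrm{iii})$, which rests on the elementary equality $\Hom_R(R/\fm,-) = \Hom_{R/\fb}(R/\fm,-)$ on $R/\fb$-modules, is the point at which the type-one property is transferred from the quotient back to $R$.
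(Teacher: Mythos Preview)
Your proof is correct and follows essentially the same route as the paper: the same cycle of implications, the same appeals to Lemmas~3.2 and~3.3, and the same reduction modulo a regular element in (iv)$\Rightarrow$(i). The only differences are cosmetic: in (iv)$\Rightarrow$(i) the paper inducts on $\depth_R(M)$ rather than on $\depth(R)$, and in (ii)$\Rightarrow$(iii) it takes $M=C/\fa C$ directly and invokes the identity $r_R(C/\fa C)=r_{R/\fa}(C/\fa C)$ from Remark~2.11 (via \cite[Exercise~1.2.26]{He}), bypassing your extra passage to the Artinian quotient $C/\fb C$.
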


\begin{proof}
   (i)$\Longrightarrow$(ii). Choose $ \fa = 0 $.

(ii)$\Longrightarrow$(iii). We show that $ C/ \fa C $ has the desired properties. First, the exact sequence \\
\centerline{$ 0 \rightarrow \fa C \rightarrow C \rightarrow  C/ \fa C \rightarrow 0 $,}
in conjunction with Proposition 2.6(iii), show that G$_C$-$ \dim_R(C/ \fa C) < \infty $. On the other hand, $ C/ \fa C $ is a Cohen-Macaulay $ R/ \fa R $-module and hence is a Cohen-Macaulay $ R$-module. Finally, by \cite[Exercise 1.2.26]{He}, we have $ r_R(C/ \fa C) = r_{R/ \fa}(C/ \fa C) = 1$.

 (iii)$\Longrightarrow$(iv). By Lemma 3.2, we have  $ r_R(C) = 1 $.

 (iv)$\Longrightarrow$(i). Assume that $ M $ is a Cohen-Macaulay $R$-module with G$ _C $-$ \dim_R(M) < \infty $. We use induction on
 $ m = \depth_R(M) $. If $ m = 0 $, then $M$ is of finite length since it is Cohen-Macaulay. Since $ \sqrt{\Ann_R(M)} = \fm $, we can choose a maximal $R$-sequence from elements of $ \Ann_R(M) $, say \textbf{x}. In view of Proposition 2.8(ii) and  Proposition 2.6(ii), we can replace $C$ by $ C/$\textbf{x}$C$ and $R$ by $ R/$\textbf{x}$R$, and assume that $ M $ is totally $ C $-reflexive. In this case, $C$ is dualizing by Lemma 3.3. Now assume inductively that $ m > 0 $. Hence $ \depth(R) > 0 $ by Proposition 2.6(iv), and we can take an element 
 $x \in\fm$ to be $M$- and $R$-regular.
 Set  $ \overline{(-)} = (-) \otimes_R R/xR$. Now $ \overline{M} $ is a Cohen-Macaulay $ \overline{R} $-module and $ r_{\overline{R}}(\overline{C}) = r_R(C) = 1 $. Also, by Proposition 2.6(i), we have G$_{\overline{C}}$-$ \dim_{\overline{R}}(\overline{M}) = $G$ _C $-$ \dim_R(M) < \infty $. Hence, by induction hypothesis, $ \overline{C} $ is dualizing for $ \overline{R} $, whence $ C $ is dualizing for $R$ by Proposition 2.8(ii).
\end{proof}

It is well-known that the existence of a finitely generated (resp. Cohen-Macaulay) module of finite injective (resp. projective) dimension implies Cohen-Macaulyness of the ring. But, in the special case that $C$ is dualizing, the proof is easy, as the following relations show \\
\centerline{$ \dim(R) = \dim_R(C) \leq \id_R(C) = \depth(R) $,}
where the first equality is from Proposition 2.4(i), and the remaining parts are from \cite[Theorem 3.1.17]{He}. Therefore, in view of Theorem 3.4, we can state the following corollary.
\begin{cor}
Let $ (R, \fm) $ be local. If there exists a Cohen-Macaulay $ R $-module of type \emph{1} and of finite \emph{G}$_C $\emph{-}dimension, then $ R $ is Cohen-Macaulay.
\end{cor}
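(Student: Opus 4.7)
The plan is to reduce the corollary directly to Theorem 3.4 and then invoke the chain of (in)equalities displayed in the paragraph preceding the corollary. So first I would observe that the hypothesis of the corollary is precisely condition (iii) of Theorem 3.4: we have a Cohen--Macaulay $R$-module $M$ with $r_R(M) = 1$ and $\mbox{G}_C\mbox{-}\dim_R(M) < \infty$. By the equivalence (iii)$\Longleftrightarrow$(i) of that theorem, this forces $C$ to be a dualizing $R$-module.

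Once $C$ is known to be dualizing, the Cohen--Macaulayness of $R$ is immediate from the displayed chain
\[
\dim(R) = \dim_R(C) \leq \id_R(C) = \depth(R),
\]
which the authors have already justified: the first equality uses Proposition 2.4(i), the middle inequality is the general bound $\dim_R(C) \leq \id_R(C)$ valid for any module of finite injective dimension (Bass), and the final equality is the Bass formula for a dualizing module as cited from \cite[Theorem 3.1.17]{He}. Combining this chain with the universal inequality $\depth(R) \leq \dim(R)$ yields $\depth(R) = \dim(R)$, i.e.\ $R$ is Cohen--Macaulay.

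Since every step is an application of a previously stated result, there is essentially no obstacle; the proof is a one-line invocation of Theorem 3.4 followed by the observation already made in the remarks. The only thing to check is that the corollary's hypothesis matches Theorem 3.4(iii) verbatim, which it does. Hence I would simply write: \emph{By Theorem 3.4, the module $C$ is dualizing, and the displayed chain of (in)equalities above forces $\depth(R) = \dim(R)$.}
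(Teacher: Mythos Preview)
Your proposal is correct and matches the paper's own argument exactly: the paper deduces that $C$ is dualizing from Theorem~3.4(iii)$\Rightarrow$(i) and then invokes the displayed chain $\dim(R) = \dim_R(C) \leq \id_R(C) = \depth(R)$ to conclude Cohen--Macaulayness. There is nothing to add.
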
	
\section{C-injective modules}
In this section, our aim is to extend two nice results of J.Xu \cite{X2}. It is well-known that a Noetherian ring $ R $ is Gorenstein if and
 only if $ \mu^i(\fp ,R) = \delta_{i,ht(\fp)} $ (the Kronecker $ \delta $). As a generalization, J.Xu \cite[Theorem 2.1]{X2}, showed that $R$ is Gorenstein if and only if for any $R$-module $F$, the necessary and sufficient condition for $F$ to be flat is that 
 $ \mu^i(\fp , F) = 0 $ for all $ \fp \in \Spec(R) $ and all $ i \neq \h(\fp) $. Next, in \cite[Theorem 3.2]{X2}, he proved a dual for this theorem. Indeed, he proved that $R$ is Gorenstein if and only if for any $R$-module $E$, the necessary and sufficient condition for $E$ to be injective is that 
 $ \pi_i(\fp , E) = 0 $ for all $ \fp \in \Spec(R) $ and all $ i \neq \h(\fp) $. In the present section, first we generalize the mentioned results. Next, we use our new results to determine the minimal flat resolution of some top local cohomology of a Cohen-Macaulay local rings and their torsion products.
\begin{lem}
	The followings are equivalent:
	\begin{itemize}
		\item[(i)]{$C$ is pointwise dualizing.}
		\item[(ii)] { $C$-$\emph\fd_R (E(R/ \fm)) = \emph\h(\fm) $ for any $\fm \in \emph\Max(R)$. }
		\item[(iii)]{ $C$-$\emph\fd_R (E(R/ \fm)) < \infty $ for any $\fm \in \emph\Max(R)$.}
		\item[(iv)] { $C$-$\emph\fd_R (E(R/ \fp)) = \emph\h(\fp) $ for any $\fp \in \emph\Spec(R)$. }
		\item[(v)]{ $C$-$\emph\fd_R (E(R/ \fp)) < \infty $ for any $\fp \in \emph\Spec(R)$.}
		\item[(vi)] { $C$-$\emph\id_R (T_{\fm}) = \emph\h(\fm) $ for any $\fm \in \emph\Max(R)$. }
		\item[(vii)]{ $C$-$\emph\id_R (T_{\fm}) < \infty $ for any $\fm \in \emph\Max(R)$.}
		\item[(viii)] { $C$-$\emph\id_R (T_{\fp}) = \emph\h(\fp) $ for any $\fp \in \emph\Spec(R)$. }
		\item[(ix)]{ $C$-$\emph\id_R (T_{\fp}) < \infty $ for any $\fp \in \emph\Spec(R)$.}
	\end{itemize}
\end{lem}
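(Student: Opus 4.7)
The implications $(\text{iv})\Rightarrow(\text{ii})$, $(\text{iv})\Rightarrow(\text{v})$, $(\text{ii})\Rightarrow(\text{iii})$, $(\text{v})\Rightarrow(\text{iii})$ and symmetrically $(\text{viii})\Rightarrow(\text{vi})$, $(\text{viii})\Rightarrow(\text{ix})$, $(\text{vi})\Rightarrow(\text{vii})$, $(\text{ix})\Rightarrow(\text{vii})$ are all trivial: each is either a restriction from arbitrary primes to maximal ones or a weakening of an equality to finiteness. So the plan is to close the circle by establishing $(\text{iii})\Rightarrow(\text{i})\Rightarrow(\text{iv})$ and $(\text{vii})\Rightarrow(\text{i})\Rightarrow(\text{viii})$.

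\textbf{Central computation on the flat side.} I would first prove, for every $\fp\in\Spec(R)$, the identity
\[
C\text{-}\fd_R\bigl(E(R/\fp)\bigr)\;=\;\id_{R_{\fp}}(C_{\fp}).
\]
Theorem 2.9(ii) rewrites the left-hand side as $\fd_R\bigl(\Hom_R(C,E(R/\fp))\bigr)$. Remark 2.15 applied to the finitely generated module $C$ identifies this Hom with $\Hom_{R_{\fp}}(C_{\fp},E_{R_{\fp}}(R_{\fp}/\fp R_{\fp}))$, an $R_{\fp}$-module whose flat dimension over $R$ and over $R_{\fp}$ coincide. Finally, specializing Lemma 2.10(i) to the trivial semidualizing module over the local ring $R_{\fp}$ with injective cogenerator $E_{R_{\fp}}(R_{\fp}/\fp R_{\fp})$ shows that $\fd_{R_{\fp}}\Hom_{R_{\fp}}(C_{\fp},E_{R_{\fp}}(R_{\fp}/\fp R_{\fp}))=\id_{R_{\fp}}(C_{\fp})$.

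\textbf{Central computation on the injective side.} The parallel identity
\[
C\text{-}\id_R(T_{\fp})\;=\;\id_{R_{\fp}}(C_{\fp})
\]
is extracted from Theorem 2.9(i), which gives $C\text{-}\id_R(T_{\fp})=\id_R(C\otimes_R T_{\fp})$. Since $T_{\fp}$ is an $R_{\fp}$-module, so is $C\otimes_R T_{\fp}=C_{\fp}\otimes_{R_{\fp}}T_{\fp}$, and its injective dimensions over $R$ and over $R_{\fp}$ agree. Because $T_{\fp}$ is the $\fp R_{\fp}$-adic completion of a nonzero free $R_{\fp}$-module and completion commutes with finite direct sums, $T_{\fp}$ splits off a copy of $\widehat{R_{\fp}}$, and so is faithfully flat over $R_{\fp}$. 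For any ideal $I$ of $R_{\fp}$, the Ext--tensor isomorphism gives
\[
\Ext^i_{R_{\fp}}(R_{\fp}/I,\,C_{\fp}\otimes_{R_{\fp}}T_{\fp})\;\cong\;\Ext^i_{R_{\fp}}(R_{\fp}/I,\,C_{\fp})\otimes_{R_{\fp}}T_{\fp};
\]
faithful flatness, together with the characterization of injective dimension via vanishing of $\Ext^\bullet(R_{\fp}/I,-)$, then yields the required equality.

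\textbf{Closing the loops.} With both identities the rest is formal. From (i) $C_{\fp}$ is dualizing for $R_{\fp}$, forcing $R_{\fp}$ Cohen--Macaulay and $\id_{R_{\fp}}(C_{\fp})=\depth(R_{\fp})=\h(\fp)$; this delivers (iv) and (viii). Conversely, from (iii) (or (vii)) each $C_{\fm}$ has finite injective dimension over $R_{\fm}$, and since $C_{\fm}$ is semidualizing, it is dualizing for $R_{\fm}$; further localising at any $\fp\subseteq\fm$ preserves dualizingness, which is (i). I expect the injective-side identity to be the main obstacle, since handling $C_{\fp}\otimes_{R_{\fp}}T_{\fp}$ demands care: $T_{\fp}$ is typically not finitely generated, so the argument leans on both the Ext--tensor interchange for Noetherian $R_{\fp}$ and the faithful flatness of a completion of a free module, neither of which is needed in the Matlis-duality step on the flat side.
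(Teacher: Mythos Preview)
Your argument is correct. On the ``flat'' half $(\text{i})\Leftrightarrow(\text{ii})\Leftrightarrow\cdots\Leftrightarrow(\text{v})$ you reproduce the paper's computation almost verbatim: both reduce $C\text{-}\fd_R(E(R/\fp))$ to $\id_{R_\fp}(C_\fp)$ via Theorem~2.9(ii), the identification $\Hom_R(C,E(R/\fp))\cong\Hom_{R_\fp}(C_\fp,E_{R_\fp}(k(\fp)))$, and Matlis duality over $R_\fp$. The genuine divergence is on the ``injective'' half. The paper does not compute $C\text{-}\id_R(T_\fp)$ directly; instead it writes $T_\fp=\Hom_R\bigl(E(R/\fp),E(R/\fp)^{(X)}\bigr)$, applies the Hom-evaluation isomorphism $C\otimes_R\Hom_R(A,B)\cong\Hom_R(\Hom_R(C,A),B)$ (valid for $C$ finitely generated and $B$ injective), and uses that $E(R/\fp)^{(X)}$ is an injective cogenerator for $R_\fp$-modules to obtain the \emph{single} equality $C\text{-}\id_R(T_\fp)=C\text{-}\fd_R(E(R/\fp))$, thereby reducing $(\text{vi})$--$(\text{ix})$ to the already-established $(\text{ii})$--$(\text{v})$. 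Your route instead identifies $C\text{-}\id_R(T_\fp)$ with $\id_{R_\fp}(C_\fp\otimes_{R_\fp}T_\fp)$ and then invokes faithful flatness of $T_\fp$ over $R_\fp$ together with the $\Ext$--tensor interchange to strip off $T_\fp$. The paper's approach is slicker in that it links the two halves by one Matlis-type identity and avoids both the $\Ext$--tensor isomorphism and the (mild) verification that $\id_R$ and $\id_{R_\fp}$ agree on $R_\fp$-modules; your approach, on the other hand, is more self-contained in that it does not rely on the explicit description of $T_\fp$ as a Hom of injectives or on the Hom-evaluation isomorphism, and it makes the dependence on the local ring $R_\fp$ fully transparent.
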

\begin{proof}
	(i)$\Longrightarrow$(ii). Assume that $\fm \in \Max(R)$. There are equalities
	\[\begin{array}{rl}
	\C\emph{-}\fd_R (E(R/ \fm)) &= \fd_R(\Hom_R(C,E(R/ \fm)))\\
	&= \fd_{R_{\fm}}(\Hom_{R_{\fm}}(C_{\fm}, E_{R_{\fm}}(R_{\fm}/ \fm R_{\fm}))\\
	&= \id_{R_{\fm}}(C_{\fm})\\
	&= \dim(R_{\fm})\\
	&= \h(\fm),\\
	\end{array}\]
	in which the first equality is from Theorem 2.9(ii), and the second one is from Remark 2.16.
	
	(ii)$\Longrightarrow$(iii). Is clear.
	
	(iii)$\Longrightarrow$(i). We can assume that $(R, \fm)$ is local. Now one can use Theorem 2.9(ii), to see that
	\[\begin{array}{rl}
	\id_R(C) &= \fd_R(\Hom_R(C,E(R/ \fm)))\\
	&= \emph{C-}\fd_R (E(R/ \fm)) < \infty, \\
	\end{array}\]
	whence $C$ is dualizing.
	
	(i)$\Longrightarrow$(iv). Let $\fp$ be a prime ideal of $R$. Note that $E(R/ \fp)_{\fq} \neq 0$ if and only if $ \fq \subseteq \fp$. Now as in  (i)$\Longrightarrow$(ii), we have $C$-$\fd_R (E(R/ \fp)) = \dim (R_{\fp}) = \h(\fp)$.
	
	(iv)$\Longrightarrow$(v). Is clear.
	
	(v)$\Longrightarrow$(i). Again, we can assume that $R$ is local. Now the proof is similar to that of (iii)$\Longrightarrow$(i).
	
	(ii)$\Longleftrightarrow$(vi) and (iii)$\Longleftrightarrow$(vii). Note that $ T_{\fm} = \Hom_R \big( E(R/ \fm) , E(R/ \fm)^{(X)} \big) $ for some set $ X $. Now we have the equalities
	\[\begin{array}{rl}
	C\emph{-}\id_R(T_{\fm}) &= \id_R(C \otimes_R T_{\fm})\\
	&=  \id_R \big( C \otimes_R \Hom_R \big( E(R/ \fm) , E(R/ \fm)^{(X)} \big) \big) \\
	&=  \id_R \big(\Hom_R \big(\Hom_R(C , E(R/ \fm)) , E(R/ \fm)^{(X)} \big) \big) \\
	&=  \fd_R (\Hom_R(C , E(R/ \fm))) \\
	&=  C\emph{-}\fd_R (E(R/ \fm)), \\
	\end{array}\]
	in which the first equality is from Theorem 2.9(i), the fourth equality is from Remark 2.16 and the fact that $ E(R/ \fm)^{(X)} $ is an injective cogenerator in the category of $ R_{\fm} $-modules, and the last one is from Theorem 2.9(ii).
	
	(iv)$\Longleftrightarrow$(viii) and (v)$\Longleftrightarrow$(ix). Are similar to (ii)$\Longleftrightarrow$(vi).
\end{proof}
The following theorem is a generalization of \cite[theorem 2.1]{X}.
\begin{thm}
	The following are equivalent:
	
	\begin{itemize}
		\item[(i)]{$C$ is pointwise dualizing.}
		\item[(ii)] {An $R$-module $M$ is $C$-flat if and only if $\mu^i(\fp , M) = 0$ for all $\fp \in \emph{\Spec}(R)$ whenever $ i \neq \emph{\h} (\fp)$.}
		\item[(iii)] {An $R$-module $M$ is flat if and only if $\mu^i(\fp , C \otimes_R M) = 0$ for all $\fp \in \emph{\Spec}(R)$ whenever $ i \neq \emph{\h} (\fp)$.}
		\end{itemize}
\end{thm}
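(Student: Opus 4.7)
\noindent\textit{Plan.} My plan is to run the cycle $(i)\Rightarrow(ii)\Rightarrow(i)$ and then derive $(i)\Leftrightarrow(iii)$ from it by applying the cancellation formula of Theorem~2.9(ii). The forward half of $(i)\Rightarrow(ii)$ is a computation: given a $C$-flat module $M = C \otimes_R F$, localization at $\fp$ gives $M_{\fp} = C_{\fp} \otimes_{R_{\fp}} F_{\fp}$, and a degreewise finitely generated free resolution of $k(\fp)$ over $R_{\fp}$ combined with flatness of $F_{\fp}$ produces the isomorphism
\[ \Ext^i_{R_{\fp}}\bigl(k(\fp),\, C_{\fp}\otimes_{R_{\fp}} F_{\fp}\bigr) \cong \Ext^i_{R_{\fp}}\bigl(k(\fp),\, C_{\fp}\bigr) \otimes_{R_{\fp}} F_{\fp}; \]
since $C_{\fp}$ is dualizing for $R_{\fp}$, the right-hand side vanishes outside $i = \dim R_{\fp} = \h(\fp)$.

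For the converse direction of $(i)\Rightarrow(ii)$, which is the heart of the theorem, I first reduce to the local case: being in $\mathcal{F}_C(R)$ is equivalent to $\Hom_R(C, M)$ being flat together with $M \in \mathcal{B}_C(R)$, and both are local properties, so I may assume $(R,\fm)$ is local with $C$ dualizing, and set $d = \dim R$. The hypothesis $\mu^i(\fp, M) = 0$ for $i \neq \h(\fp)$ forces the $i$-th term of the minimal injective resolution $0 \to M \to I^{\bullet}$ to have the form $I^i = \bigoplus_{\h(\fp) = i} E(R/\fp)^{\mu^i(\fp, M)}$, which vanishes for $i > d$. Hence $\id_R M \leq d$, so $M \in \mathcal{B}_C(R)$ and $\Ext^{\geq 1}_R(C, M) = 0$; applying $\Hom_R(C, -)$ to the resolution then yields the exact complex
\[ 0 \to \Hom_R(C, M) \to \Hom_R(C, I^0) \to \cdots \to \Hom_R(C, I^d) \to 0. \]
By Remark~2.16 and Lemma~4.1, each $\Hom_R(C, I^i)$ has flat dimension at most $i$. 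Writing $K^i$ for the $i$-th cocycle of this complex (so $K^0 = \Hom_R(C, M)$ and $K^d = \Hom_R(C, I^d)$), a descending induction on $i$ using the short exact sequences $0 \to K^i \to \Hom_R(C, I^i) \to K^{i+1} \to 0$ together with the standard bound $\fd_R K^i \leq \max\{\fd_R \Hom_R(C, I^i),\, \fd_R K^{i+1} - 1\}$ gives $\fd_R K^i \leq i$ for every $i$. In particular $\Hom_R(C, M)$ is flat, and so $M \cong C \otimes_R \Hom_R(C, M)$ is $C$-flat.

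For $(ii)\Rightarrow(i)$, I would test (ii) on $M = C$, which is $C$-flat since $C \cong C \otimes_R R$: the resulting vanishing $\mu^i(\fp, C) = 0$ for $i \neq \h(\fp)$ bounds $\id_{R_{\fp}} C_{\fp} \leq \h(\fp) < \infty$ for every $\fp$, making each $C_{\fp}$ dualizing. For $(i)\Leftrightarrow(iii)$, Theorem~2.9(ii) gives $\fd_R M = C$-$\fd_R(C \otimes_R M)$, so $M$ is flat if and only if $C \otimes_R M$ is $C$-flat; combining this with the already-established $(i)\Leftrightarrow(ii)$ applied to $C \otimes_R M$ yields (iii), while $(iii)\Rightarrow(i)$ follows by testing (iii) on the flat module $R$ to recover the Bass-number condition on $C$. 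The main obstacle is the descending induction in the converse half of $(i)\Rightarrow(ii)$: one must extract genuine flatness of $\Hom_R(C, M)$ from a complex whose $i$-th term only knows $\fd \leq i$, and the argument succeeds only because the hypothesis $\mu^i(\fp, M) = 0$ for $i \neq \h(\fp)$ dovetails precisely with the Lemma~4.1 equality $C$-$\fd_R E(R/\fp) = \h(\fp)$, with the kickoff bound $\id_R M \leq d$ coming from exactly this same numerical match.
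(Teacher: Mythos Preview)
Your proposal is correct and follows essentially the same architecture as the paper: reduce to the local case, bound $\id_R M$ via the Bass-number hypothesis, and then run a descending induction along the minimal injective resolution using Lemma~4.1 to force $C$-flatness; the implications involving (iii) and the return to (i) likewise proceed by testing on $R$ (equivalently $C$). The only cosmetic differences are that the paper handles the forward half of (i)$\Rightarrow$(ii) by tensoring the minimal injective resolution of $C$ with $F$ (rather than your direct Ext computation), carries out the induction in terms of $C$-$\fd$ citing \cite[Corollary~5.7]{STWY} (rather than first applying $\Hom_R(C,-)$ and using ordinary $\fd$), and deduces (ii)$\Rightarrow$(iii) via the Foxby equivalence $\mathcal{A}_C\leftrightarrow\mathcal{B}_C$ (rather than your one-line appeal to Theorem~2.9(ii)); none of these change the substance of the argument.
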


\begin{proof}
	(i)$\Longrightarrow$(ii). First assume that $M$ is $C$-flat. Set $ M= C \otimes_R F$, where $F$ is a flat $R$-module. Since $C$ is pointwise dualizing, we have $\mu^i(\fp , C) = 0$ for all $\fp \in \Spec(R)$ with $ i \neq \h (\fp)$. Assume that \\
	\centerline{ $ 0 \rightarrow C \rightarrow E^0(C) \rightarrow E^1(C) \rightarrow ... \rightarrow E^i(C) \rightarrow ...$}
	is the minimal injective resolution of $C$. By applying the exact functor $ - \otimes_R F$ to this resolution, we find an exact complex \\
	\centerline{ $ 0 \rightarrow M = C \otimes_R F \rightarrow E^0(C) \otimes_R F \rightarrow E^1(C) \otimes_R F \rightarrow ... \rightarrow E^i(C) \otimes_R F \rightarrow ...$, $ (*) $}
	which is an injective resolution for $M$. By \cite[Theorem 3.3.12]{EJ1} the injective $R$-module $ E(R / \fp) \otimes_R F$ is a direct sum of copies of
	$ E(R / \fp)$ for each $ \fp \in \Spec(R)$. Now, since the minimal injective resolution of $M$ is a direct summand of the complex $ (*) $, we get the result. Conversely, suppose that $M$ is an $R$-module such that $\mu^i(\fp , M) = 0$ for all $\fp \in \Spec(R)$ whenever $ i \neq \h(\fp)$. In order to show that $M$ is $C$-flat, it is enough to prove that $M_{\fm}$ is $C_{\fm}$-flat $R_{\fm}$-module for all $\fm \in \Max(R)$. For if $M_{\fm}$ is $C_{\fm}$-flat $R_{\fm}$-module for all $\fm \in \Max(R)$, then $ \Hom_R(C,M)_{\fm} \cong \Hom_{R_{\fm}}(C_{\fm},M_{\fm})$ is flat as an $R_{\fm}$-module for all $\fm \in \Max(R)$ by Theorem 2.9(ii). Hence $\Hom_R(C,M)$ is a flat $R$-module and thus $M$ is $C$-flat by Theorem 2.9(ii). Hence, replacing $R$ by $R_{\fm}$, we can assume that $(R, \fm)$ is local. Clearly we may assume that $M \neq 0$. In this case we have
	$ \id_R(M) < \infty$ since by assumption $\mu^i(\fp , M) = 0$ for all $\fp \in \Spec(R)$ and all $ i > \dim(R)$ . Hence the assumption in conjunction with Lemma 4.1, imply that $M$ has a bounded injective resolution all of whose terms have finite $C$-flat dimensions. More precisely, by Lemma 4.1, if $E^i$ is the $i$-th term in the minimal injective resolution of $M$, then $C$-fd$_R(E^i) = i$ for all $ 0 \leq i \leq \id(M)$. Breaking up this resolution to short exact sequences and using
	\cite[Corollary 5.7]{STWY}, we can conclude that $C$-fd$_R(M) = 0$. Hence $M$ is $C$-flat, as wanted.
	
	(ii)$\Longrightarrow$(iii). Assume that $M$ is a flat $R$-module. Then $ C \otimes_R M \in \mathcal{F}_C$ and
	  $\mu^i(\fp , C \otimes_R M) = 0$ for all $\fp \in \Spec(R)$ whenever $ i \neq \h (\fp)$ by assumption. Conversely, suppose that $\mu^i(\fp , C \otimes_R M) = 0$ for all $\fp \in \Spec(R)$ whenever $ i \neq \h (\fp)$. Then, by assumption, 
	  $ C \otimes_R M $ is $ C $-flat. Set $ C \otimes_R M = C \otimes_R F $, where $F$ is flat. Therefore $ C \otimes_R M \in \mathcal{B}_C(R) $, whence $ M \in \mathcal{A}_C(R) $ by \cite[Theorem 2.8(b)]{TW}. Thus we have the isomorphisms
	    \[\begin{array}{rl}
	    M &\cong \Hom_R(C,C \otimes_R M) \\
	    &\cong \Hom_R(C,C \otimes_R F)\\
	    &\cong F,\\
	    \end{array}\]
where the first and the last isomorphism hold since both $M$ and $F$ are in $ \mathcal{A}_C(R) $.

	(iii)$\Longrightarrow$(i). Note that $R$ is a flat $R$-module. Hence by assumption, if $\fm \in \Max(R)$, then $\mu^i(\fm , C \otimes_R R) = 0$ for all $ i > \h (\fm)$. Thus $ \id_{R_{\fm}}(C_{\fm}) < \infty$, as wanted.
\end{proof}

\begin{thm}
The following are equivalent:
\begin{itemize}
	\item[(i)]{$C$ is pointwise dualizing.}
	\item[(ii)]{An $R$-module $M$ is $C$-injective if and only if $ \pi_i(\fp,M) = 0 $ for all $ \fp \in \emph{\Spec}(R) $ whenever $ i \neq \h(\fp) $.}
	\item[(iii)]{An $R$-module $M$ is injective if and only if $ \pi_i(\fp,\emph{\Hom}_R(C,M)) = 0 $ for all $ \fp \in \emph{\Spec}(R) $ whenever $ i \neq \h(\fp) $.}
\end{itemize}
\end{thm}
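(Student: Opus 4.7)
The plan is to establish the cycle $(i) \Rightarrow (ii) \Rightarrow (iii) \Rightarrow (i)$, running in parallel with the proof of Theorem 4.2 but with flat and injective roles interchanged, Bass numbers replaced by dual Bass numbers, and minimal injective resolutions replaced by minimal flat resolutions. The passage between $(ii)$ and $(iii)$ will rely crucially on the identity $\id_R(M) = C\text{-}\id_R(\Hom_R(C, M))$ from Theorem 2.9(i).

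For the forward direction of $(i) \Rightarrow (ii)$, suppose $M = \Hom_R(C, I)$ with $I$ injective. The adjunction $\Hom_R(F, \Hom_R(C,I)) \cong \Hom_R(F \otimes_R C, I)$ shows $M$ is cotorsion, so $\pi_i(\fp, M)$ is well-defined. Using the formula of \cite[Theorem 2.2]{EX} and the identification $\Hom_R(R_\fp, \Hom_R(C,I)) \cong \Hom_{R_\fp}(C_\fp, \Hom_R(R_\fp, I))$ with $\Hom_R(R_\fp, I)$ injective over $R_\fp$, the problem reduces to the following local computation: for a local ring $(R, \fm, k)$ with $C$ dualizing and any injective $R$-module $J$, one has $\Tor_i^R(k, \Hom_R(C, J)) = 0$ for $i \neq \dim R$. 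Taking a resolution $P_\bullet \to k$ by finitely generated free modules, tensor-evaluation yields $P_j \otimes_R \Hom_R(C,J) \cong \Hom_R(\Hom_R(P_j, C), J)$, and exactness of $\Hom_R(-,J)$ then gives $\Tor_i^R(k, \Hom_R(C, J)) \cong \Hom_R(\Ext_R^i(k, C), J)$. Since $C$ is dualizing, $\Ext_R^i(k, C) = 0$ for $i \neq \dim R$, proving the vanishing. For the converse, assume $\pi_i(\fp, M) = 0$ for $i \neq \h(\fp)$; then the minimal flat resolution of $M$ has terms $F_i \cong \prod_{\h(\fp) = i} T_\fp^{(X_\fp^i)}$, each of which has $C\text{-}\id_R(F_i) = i$ by Lemma 4.1(viii) (using that $\Ext$ commutes with products). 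After reducing to a local situation where the resolution has length at most $\dim R$, splitting into short exact sequences $0 \to K_{i+1} \to F_i \to K_i \to 0$ and applying the standard short-exact-sequence inequality for $C$-injective dimension (the $C\text{-}\id$ analogue of \cite[Corollary 5.7]{STWY}) gives $C\text{-}\id_R(K_i) \leq i$ by descending induction, hence $C\text{-}\id_R(M) = 0$.

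The equivalence $(ii) \Leftrightarrow (iii)$ is immediate from Theorem 2.9(i): $M$ is injective iff $\id_R(M) = 0$ iff $C\text{-}\id_R(\Hom_R(C,M)) = 0$ iff $\Hom_R(C, M)$ is $C$-injective iff, by $(ii)$, $\pi_i(\fp, \Hom_R(C, M)) = 0$ for $i \neq \h(\fp)$. Finally, for $(iii) \Rightarrow (i)$, by Lemma 4.1 it suffices to show $\id_{R_\fm}(C_\fm) < \infty$ for each maximal ideal $\fm$, so I localize and assume $(R, \fm, k)$ is local. Applying $(iii)$ to the injective module $E(k)$ and using Remark 2.16 together with the tensor-evaluation computation above,
\[ \pi_i(\fm, \Hom_R(C, E(k))) = \vdim_k \Hom_R(\Ext_R^i(k, C), E(k)) = \mu^i(\fm, C). \]
Hypothesis $(iii)$ then forces $\mu^i(\fm, C) = 0$ for $i > \dim R$, so $\id_R(C) < \infty$, and together with $C$ being semidualizing this means $C$ is dualizing.

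The main obstacle will be the converse of $(i) \Rightarrow (ii)$: carrying out the descending induction on the syzygies of the minimal flat resolution requires confirming both that $C\text{-}\id$ behaves on short exact sequences in the expected way and that $C$-injectivity is a local condition (so that one may legitimately reduce to the local case with a bounded flat resolution). Both points follow from the machinery of the Bass class $\mathcal{B}_C$ as developed in \cite{TW,STWY}, but they need to be invoked with care.
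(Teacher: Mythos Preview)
Your forward direction of (i)$\Rightarrow$(ii), the passage (ii)$\Rightarrow$(iii) via Theorem~2.9(i), and the implication (iii)$\Rightarrow$(i) all agree in substance with the paper's argument (the paper unwinds (ii)$\Rightarrow$(iii) through the Auslander and Bass classes rather than quoting Theorem~2.9(i), but this is the same content). The real divergence is in the converse half of (i)$\Rightarrow$(ii), and there your direct syzygy induction has a genuine gap.

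The paper does not attempt a descending induction on the minimal flat resolution. Instead it applies $\Hom_R(-,E)$, with $E$ an injective cogenerator, to the minimal flat resolution $F_\bullet\to M$, obtaining an injective resolution of $\Hom_R(M,E)$. Using Lemma~4.1 together with Lemma~2.10 it verifies that the $i$-th injective term contributes only copies of $E(R/\fq)$ with $\h(\fq)=i$, hence $\mu^i(\fp,\Hom_R(M,E))=0$ whenever $i\neq\h(\fp)$. The already–proved Theorem~4.2 then yields that $\Hom_R(M,E)$ is $C$-flat, and Lemma~2.10(i) converts this back into $C$-injectivity of $M$.

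The problem with your route is the phrase ``reducing to a local situation where the resolution has length at most $\dim R$.'' You are right that $C$-injectivity can be tested locally, but that is not enough: having replaced $M$ by $M_{\fm}$ you would also need to know $\pi_i(\fp,M_{\fm})=0$ for $i\neq\h(\fp)$ over $R_{\fm}$, so that the minimal flat resolution of $M_{\fm}$ again has $i$-th term $\prod_{\h(\fp)=i}T_{\fp}$. Bass numbers localize perfectly because $\mu^i(\fp,M)$ is computed from $\Ext^i_{R_{\fp}}(k(\fp),M_{\fp})$, and this is exactly why the reduction step in the proof of Theorem~4.2 succeeds. The dual invariants $\pi_i(\fp,M)$ are computed from $\Tor_i^{R_{\fp}}\big(k(\fp),\Hom_R(R_{\fp},M)\big)$, and there is no evident comparison between $\Hom_R(R_{\fp},M)$ and $\Hom_{R_{\fm}}(R_{\fp},M_{\fm})$; nor does localizing the resolution $F_\bullet$ help, since infinite products do not commute with localization. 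Because a ring with $C$ pointwise dualizing can have infinite Krull dimension, you cannot simply assume the flat resolution is globally bounded, and without a bounded resolution your descending induction has no base case. The paper's Matlis-dual device is designed precisely to trade the poorly localizing $\pi_i$ for the well-behaved $\mu^i$ and thereby inherit the localization argument already carried out in Theorem~4.2.
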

\begin{proof}
	 (i)$ \Longrightarrow $(ii) . Assume that $M$ is a nonzero $C$-injective $R$-module. Set $ M = \Hom_R(C , E) $ with $E$ is injective. First, we show that $M$ is cotorsion. Assume that $F$ is a flat $R$-module. Then, by \cite[Theorem 3.2.1]{EJ1}, we have
	 $ \Ext_R^1(F,\Hom_R(C,E)) \cong \Hom_R(\Tor^R_1(F,C) , E) = 0 $, and hence $M$ is cotorsion.
	  Fix a prime ideal $ \fp $ of $R$ and set $ k(\fp) = R_{\fp} / {\fp} R_{\fp} $. Note that $ \Hom_R(R_{\fp},E) $ is an injective $R$-module and that $ \Hom_R(R_{\fp},E) \cong \underset{\fq \in X}\oplus E(R/ \fq) $, where $ X \subseteq \Ass_R(E) $ and each element of $X$ is a subset of $\fp$. There are isomorphisms
	      \[\begin{array}{rl}
	      \Tor^{R_{\fp}}_i \big(k(\fp) , \Hom_R(R_{\fp} , \Hom_R(C , E)) \big) &\cong \Tor^{R_{\fp}}_i \big(k(\fp) , \Hom_R(C_{\fp} , E) \big) \\
	      &\cong \Tor^{R_{\fp}}_i \big(k(\fp) , \Hom_{R_{\fp}}(C_{\fp} , \Hom_R(R_{\fp} ,E) \big)\\
	      &\cong \Tor^{R_{\fp}}_i \big(k(\fp) , \Hom_{R_{\fp}}(C_{\fp} , \underset{\fq \in X}\oplus E(R/ \fq) \big)\\
	      &\cong \Hom_{R_{\fp}}\big( \Ext^i_{R_{\fp}}(k(\fp) , C_{\fp}) ,  \underset{\fq \in X}\oplus E(R/ \fq) \big), \\
	      \end{array}\]
where the last isomorphism is from \cite[Theorem 3.2.13]{EJ1}. Now since $ C_{\fp} $ is dualizing for $ R_{\fp} $, we have $ \Ext^i_{R_{\fp}}(k(\fp) , C_{\fp}) = 0 $ for all  $ i \neq \h(\fp) $. Therefore $ \pi_i(\fp,M) = 0 $ for all $ i \neq \h(\fp) $. Conversely, assume that $M$ is a non-zero $R$-module with $ \pi_i(\fp,M) = 0 $ for all $ i \neq \h(\fp) $. By assumption, the minimal flat resolution of $M$ is of the form \\
 \centerline{$ \cdots \longrightarrow F_i \longrightarrow \cdots \longrightarrow F_1 \longrightarrow F_0 \longrightarrow M \longrightarrow 0 $,}
 in which $ F_i = \underset{\h(\fp) = i} \prod T_{\fp} $ for all $ i \geq 1 $. Also, in view of \cite[Lemma 3.1]{X2}, we have $ F_0 = \underset{\h(\fp) = 0} \prod T_{\fp} $. Hence the minimal flat resolution of $M$ is of the form \\
 \centerline{$ \cdots \longrightarrow \underset{\h(\fp) = i} \prod T_{\fp} \longrightarrow \cdots \longrightarrow \underset{\h(\fp) = 1} \prod T_{\fp}  \longrightarrow \underset{\h(\fp) = 0} \prod T_{\fp} \longrightarrow M \longrightarrow 0 $. $ (*) $} 
Let $E$ be an injective cogenerator. According to Lemma 2.10(i), it is enough to show that $\Hom_R(M,E)$ is $C$-flat. In fact, by Theorem 4.2, we need only to show that $ \mu^i(\fp , \Hom_R(M,E)) = 0$ for all $ i \neq \h(\fp) $ and all $ i \geq 0 $. Applying the exact functor
$ \Hom_R(- , E) $ on $ (*) $, we get an injective resolution \\
\centerline{$ 0 \longrightarrow \Hom_R(M,E) \longrightarrow \Hom_R\Big( \underset{\h(\fp) = 0} \prod T_{\fp} , E \Big) \longrightarrow $}
\centerline{$\Hom_R\Big( \underset{\h(\fp) = 1} \prod T_{\fp} , E \Big)  \longrightarrow  \cdots \longrightarrow \Hom_R\Big( \underset{\h(\fp) = i} \prod T_{\fp} , E \Big) \longrightarrow \cdots $,}
for $ \Hom_R(M,E) $. Note that $ \Hom_R\Big( \underset{\h(\fp) = i} \prod T_{\fp} , E \Big)  $ is an injective $R$-module for all $ i \geq 0 $. Set
$ \Hom_R\Big( \underset{\h(\fp) = i} \prod T_{\fp} , E \Big) \cong \oplus E(R/ \fq) $. We show that $ \h(\fq) = i$. Since $C$ is pointwise dualizing, by Lemma 4.1, we have $ C $-$ \fd_R(E(R/ \fq)) = \h(\fq) $. On the other hand, we have the equalities
\[\begin{array}{rl}
C\emph{-}\fd_R(E(R/ \fq)) &= C\emph{-}\fd_R( \oplus E(R/ \fq)) \\
&= C\emph{-}\fd_R\Big(\Hom_R\Big( \underset{\h(\fp) = i} \prod T_{\fp} , E \Big)\Big) \\
&= C\emph{-}\id_R\Big( \underset{\h(\fp) = i} \prod T_{\fp} \Big)\\
&= i, \\
\end{array}\]
in which the third equality is from Lemma 2.10(i), and the last one is from Lemma 4.1. Hence $ \mu^i(\fp , \Hom_R(M,E)) = 0$ for all $ i \geq 0 $ with $ i \neq \h(\fp) $, as wanted.

	(ii)$\Longrightarrow$(iii). Assume that $M$ is an injective $R$-module. Then $ \Hom_R(C,M) \in \mathcal{I}_C$ and
	$\mu^i(\fp , \Hom_R(C,M)) = 0$ for all $\fp \in \Spec(R)$ whenever $ i \neq \h (\fp)$ by assumption. Conversely, suppose
	 that $\mu^i(\fp , \Hom_R(C,M)) = 0$ for all $\fp \in \Spec(R)$ whenever $ i \neq \h (\fp)$. Then, by assumption, 
	$ \Hom_R(C,M) $ is $ C $-injective. Set $ \Hom_R(C,M) = \Hom_R(C,I) $, where $I$ is injective. Therefore $ \Hom_R(C,M) \in \mathcal{A}_C(R) $, whence $ M \in \mathcal{B}_C(R) $ by \cite[Theorem 2.8(a)]{TW}. Thus we have the isomorphisms
	\[\begin{array}{rl}
	M &\cong C \otimes_R \Hom_R(C, M) \\
	&\cong C \otimes_R \Hom_R(C, I)\\
	&\cong I,\\
	\end{array}\]
	where the first and the last isomorphism hold since both $M$ and $I$ are in $ \mathcal{B}_C(R) $. 

 (iii)$ \Longrightarrow $(i). Assume that $\fm$ is a maximal ideal of $R$. Set  $ k(\fm) = R_{\fm} / {\fm} R_{\fm} $. Since 
 $ E(R/\fm) $ is injective, by assumption, we have  $ \pi_i \big(\fm, \Hom_R(C,E(R/ \fm))\big) = 0$  fo all $ i \neq \h(\fm) $. On the other hand, there are isomorphisms
 \[\begin{array}{rl}
 \Hom_{R_{\fm}}\big( \Ext^i_{R_{\fm}}(k(\fm) , C_{\fm}) ,  E(k(\fm)) \big) &\cong \Tor^{R_{\fm}}_i \big(k(\fm) ,\Hom_{R_{\fm}}(C_{\fm} , E(k(\fm)) \big) \\
 &\cong \Tor^{R_{\fm}}_i \big(k(\fm) ,\Hom_{R_{\fm}}(C_{\fm} \otimes_{R_{\fm}} R_{\fm} , E(k(\fm)) \big) \\
 &\cong \Tor^{R_{\fm}}_i \big(k(\fm) ,\Hom_R(R_{\fm} , \Hom_{R_{\fm}}(C_{\fm} , E(k(\fm)) \big)\\
 &\cong \Tor^{R_{\fm}}_i \big(k(\fm) ,\Hom_R(R_{\fm}, \Hom_R(C , E(R/ \fm)) \big),\\
  \end{array}\]
  where the first isomorphism is from \cite[Theorem 3.2.13]{EJ1}, and the last one is from Remark 2.16. From this isomorphisms, it follows that $ \Hom_{R_{\fm}}\big( \Ext^i_{R_{\fm}}(k(\fm) , C_{\fm}) ,  E(k(\fm)) \big) = 0 $ for all $ i \neq \h(\fm) $, from which we conclude that $ \Ext^i_{R_{\fm}}(k(\fm) , C_{\fm}) = 0 $ for all $ i \neq \h(\fm) $, since $ E(k(\fm)) $ is an injective cogenerator in the category of $ R_{\fm} $-modules. Thus $ C_{\fm} $ is dualizing for $ R_{\fm} $, as required.
\end{proof}

\begin{cor}
	Let $C$ be pointwise dualizing. Then flat cover of any $C$-injective $R$-module is $C$-injective. 
\end{cor}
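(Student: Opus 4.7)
The plan is to verify that the flat cover of a $C$-injective module $M$ satisfies the numerical criterion of Theorem 4.3(ii), namely $\pi_i(\fq, F) = 0$ for all $i \neq \h(\fq)$. No new homological input is needed; once the flat cover is identified with the leading term of the minimal flat resolution of $M$, the invariants can be read off directly.

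First I would confirm that $M$ is cotorsion, so that the minimal flat resolution of $M$ exists. This was already done at the start of the proof of Theorem 4.3(i)$\Rightarrow$(ii): writing $M = \Hom_R(C,E)$ with $E$ injective and using $\Ext^1_R(G,\Hom_R(C,E)) \cong \Hom_R(\Tor_1^R(G,C),E) = 0$ for every flat $G$. By Remark 2.14, the flat cover $F \rightarrow M$ is surjective with cotorsion kernel, so $F$ is flat and cotorsion, and coincides with the degree zero term $F_0$ of the minimal flat resolution of $M$ (up to isomorphism, via uniqueness of flat covers).

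Next I would exploit the explicit form of that resolution recorded as $(\ast)$ in the proof of Theorem 4.3: since $M$ is $C$-injective, Theorem 4.3(ii) gives $\pi_0(\fp,M) = 0$ whenever $\h(\fp) > 0$, which forces $F_0 = \prod_{\h(\fp)=0} T_{\fp}$. Because $F$ itself is flat, its own minimal flat resolution is the trivial complex concentrated in degree zero. Hence $\pi_i(\fq,F) = 0$ for all $i \geq 1$ automatically, while $\pi_0(\fq,F) = 0$ for every $\fq$ with $\h(\fq) \neq 0$, since only height zero primes contribute factors to $F_0$. Together these vanishings give $\pi_i(\fq,F) = 0$ whenever $i \neq \h(\fq)$, and a second application of Theorem 4.3(ii) then yields that $F$ is $C$-injective.

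There is no serious obstacle here; the proof is essentially a bookkeeping exercise combining Enochs's structure theorem for flat cotorsion modules with the characterization in Theorem 4.3. The only point deserving care is the identification of the flat cover of $M$ with the $F_0$ appearing in $(\ast)$, which is guaranteed by the iterative construction of the minimal flat resolution reviewed in Remark 2.14.
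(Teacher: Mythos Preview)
Your proof is correct and shares the crucial first step with the paper: after noting $M$ is cotorsion, both arguments identify the flat cover $F(M)$ with $F_0 = \prod_{\h(\fp)=0} T_{\fp}$ via Theorem 4.3. The difference lies only in the concluding step. You reapply Theorem 4.3 to $F$ itself, checking the $\pi_i$-criterion directly (using that $F$ is flat, so $\pi_i(\fq,F)=0$ for $i\geq 1$, and that only height-zero primes appear in $F_0$, so $\pi_0(\fq,F)=0$ for $\h(\fq)>0$). The paper instead invokes Lemma 4.1 to see that $C\text{-}\id_R(T_{\fp}) = \h(\fp) = 0$ for each height-zero $\fp$, so every factor $T_{\fp}$ is already $C$-injective, and then uses that $\mathcal{I}_C$ is closed under arbitrary products. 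The paper's route is marginally shorter and avoids a second appeal to the full strength of Theorem 4.3, but both arguments are straightforward once $F_0$ has been identified.
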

\begin{proof}
By Lemma 4.1,  $ C $-$ \id_R(T_{\fp}) = 0$ for any prime ideal $ \fp $ with $ \h(\fp) = 0 $. Hence $ T_{\fp} $ is $C$-injective for any prime ideal $ \fp $ with $ \h(\fp) = 0 $.	Assume that $M$ is a $C$-injective $R$-module. By Theorem 4.3, we have $ F(M) = \underset{\h(\fp) = 0} \prod T_{\fp} $. Now the result follows since the class $ \mathcal{I}_C $ closed under arbitrary direct product.
\end{proof}
\begin{cor}
The $R$-module $C$ is pointwise dualizing if and only if for any prime ideal $\fp$ of $R$,
  $$\pi_i \big(\fp, \emph\Hom_R(C,E(R/ \fp)) \big) = \left\lbrace
  \begin{array}{c l}
  1\ \ & \text{ \ \ \ \ \ $i=\emph\h(\fp)$,}\\
  0\ \ & \text{ \ \ \ \ \ $i\neq \emph\h(\fp).$}\\
  \end{array}
  \right.$$\\
\end{cor}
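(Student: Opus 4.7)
The plan is to reduce the statement to the Ext-vanishing characterization of dualizing modules by exploiting the key isomorphism computed in the proof of Theorem 4.3. That proof already records, for any injective $R$-module $E$ and any prime $\fp$, a chain of isomorphisms
\[
\Tor^{R_{\fp}}_i\big(k(\fp),\Hom_R(R_{\fp},\Hom_R(C,E))\big)\cong \Hom_{R_{\fp}}\!\big(\Ext^i_{R_{\fp}}(k(\fp),C_{\fp}),\,\Hom_R(R_{\fp},E)\big),
\]
so the strategy is to specialize $E=E(R/\fp)$ and read off information in both directions.

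For the forward implication, I will apply this with $E=E(R/\fp)$. Since $\Ass_R E(R/\fp)=\{\fp\}$, Remark 2.16 gives $\Hom_R(R_{\fp},E(R/\fp))\cong E(R/\fp)\cong E_{R_{\fp}}(k(\fp))$. If $C$ is pointwise dualizing then $C_{\fp}$ is dualizing for $R_{\fp}$ of type $1$, so
\[
\Ext^i_{R_{\fp}}(k(\fp),C_{\fp})\cong
\begin{cases} k(\fp) & i=\h(\fp),\\ 0 & i\neq \h(\fp).\end{cases}
\]
Plugging this in and using that $\Hom_{R_{\fp}}(k(\fp),E_{R_{\fp}}(k(\fp)))\cong k(\fp)$, together with Enochs--Xu's formula $\pi_i(\fp,M)=\vdim_{k(\fp)}\Tor^{R_{\fp}}_i(k(\fp),\Hom_R(R_{\fp},M))$ of Definition 2.15, yields exactly the two claimed values.

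For the converse, I will run the same calculation with the roles reversed. Fix an arbitrary prime $\fp$; apply the displayed isomorphism with $E=E(R/\fp)$ and take $k(\fp)$-dimensions. Matlis duality over $R_{\fp}$ (applied to the $k(\fp)$-vector space $\Ext^i_{R_{\fp}}(k(\fp),C_{\fp})$) gives
\[
\pi_i(\fp,\Hom_R(C,E(R/\fp)))=\vdim_{k(\fp)}\Ext^i_{R_{\fp}}(k(\fp),C_{\fp}).
\]
The hypothesis then forces $\Ext^i_{R_{\fp}}(k(\fp),C_{\fp})=0$ for $i\neq \h(\fp)$; in particular $\id_{R_{\fp}}(C_{\fp})\le \h(\fp)<\infty$, so $C_{\fp}$ is dualizing for $R_{\fp}$. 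As this holds for every $\fp\in\Spec(R)$, $C$ is pointwise dualizing.

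There is no real obstacle: the entire proof is bookkeeping around the isomorphism extracted from Theorem 4.3, and the only point that deserves care is checking that $\Hom_R(R_{\fp},E(R/\fp))\cong E(R/\fp)$ (so that the outer Hom collapses to an ordinary Matlis dual over $R_{\fp}$) and that $C_{\fp}$ being dualizing automatically has type $1$ to produce the value $1$ at $i=\h(\fp)$.
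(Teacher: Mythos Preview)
Your proof is correct and follows essentially the same route as the paper's: both reduce the computation of $\pi_i(\fp,\Hom_R(C,E(R/\fp)))$ to $\vdim_{k(\fp)}\Hom_{R_\fp}\!\big(\Ext^i_{R_\fp}(k(\fp),C_\fp),E(R/\fp)\big)$ via the Tor--Ext duality isomorphism and Remark 2.16, and then invoke the Ext-characterization of a dualizing module. The only cosmetic differences are that the paper rederives the key isomorphism chain in place (rather than citing it from the proof of Theorem 4.3) and packages both implications into a single string of iffs, whereas you argue the two directions separately and, for the converse, deduce $\id_{R_\fp}(C_\fp)<\infty$ from the vanishing of $\Ext^i_{R_\fp}(k(\fp),C_\fp)$ for $i>\h(\fp)$ rather than invoking the full Ext-description of the dualizing module.
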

\begin{proof}
	Assume that $ \fp \in \Spec(R)$. Set $ k(\fp) = R_{\fp} / {\fp} R_{\fp} $. We have the following equalities
 \[\begin{array}{rl}
 \pi_i \big(\fp, \Hom_R(C,E(R/ \fp)) \big) &= \vdim_{k(\fp)} \Tor^{R_{\fp}}_i \big(k(\fp),\Hom_R(R_{\fp} , \Hom_R(C , E(R/ \fp)) \big) \\
 &=\vdim_{k(\fp)} \Tor^{R_{\fp}}_i \big(k(\fp) ,\Hom_R(C_{\fp} , \Hom_{R_{\fp}}(R_{\fp} , E(R/ \fp)) \big) \\
 &=\vdim_{k(\fp)} \Tor^{R_{\fp}}_i \big(k(\fp) ,\Hom_R(C_{\fp} , E(R/ \fp)) \big)\\
 &=\vdim_{k(\fp)} \Hom_{R_{\fp}}\big(\Ext_{R_{\fp}}^i (k(\fp) , C_{\fp}), E(R/ \fp) \big),\\
 \end{array}\]	
 where the second equality is from Remark 2.16, and the last equality is from \cite[Theorem 3.2.13]{EJ1}. Now, $C$ is pointwise dualizing if and only if $ C_{\fp} $ is the dualizing module of $ R_{\fp} $ for all $ \fp \in \Spec(R)$, and this is the case if and only if
 $$\Ext^i_{R_{\fp}} (k(\fp) ,C_{\fp}) \cong \left\lbrace
 \begin{array}{c l}
 k(\fp)\ \ & \text{ \ \ \ \ \ $i=\h(\fp)$,}\\
 0\ \ & \text{ \ \ \ \ \ $i\neq \h(\fp).$}\\
 \end{array}
 \right.$$\\
for all $ \fp \in \Spec(R)$. Thus we are done by the above equalities and the fact that $ \Hom_{R_{\fp}}(k(\fp) , E(R / \fp)) \cong k(\fp) $. 
\end{proof}
In the following corollaries, we are concerned with the local cohomology. For an $R$-module $M$, the $ i $-th local cohomology module of $M$ with respect to an ideal $ \fa $ of $R$, denoted by $ \H^i_{\fa}(M) $, is defined to be \\
\centerline{$  \H^i_{\fa}(M) = \underset{\underset{n \geq 1} \longrightarrow}  \lim \Ext^i_R(R/ \fa^n , M) $.}
For the basic properties of local cohomology modules, please see the textbook \cite{BS}.
\begin{cor}
Let $ (R, \fm) $ be a Cohen-Macaulay local ring with $ \emph{\dim}(R) = d $ possesing a canonical module $ \omega_R $. Then 
$ \pi_i \big(\fm, \emph\H_{\fm}^d(R) \big) = \delta_{i,d}$, and $ \pi_i \big(\fq, \emph\H_{\fm}^d(R) \big) = 0 $ for any non-maximal prime ideal $ \fq $ whenever $ i \neq \emph{\h}(\fq) $. 
\end{cor}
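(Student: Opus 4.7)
The plan is to realise $\H_{\fm}^d(R)$ as an $\omega_R$-injective module, and then invoke Theorem 4.3 and Corollary 4.5 with $C = \omega_R$. Since $R$ is Cohen--Macaulay local and admits a canonical module, $\omega_R$ is dualizing, hence in particular pointwise dualizing and semidualizing, so the machinery developed in this section applies with $C = \omega_R$.

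First I would invoke the local duality theorem (cf.\ \cite{BS}) in the form $\H^i_{\fm}(M) \cong \Hom_R(\Ext^{d-i}_R(M, \omega_R), E(R/\fm))$ for finitely generated $M$, specialised to $M = R$ and $i = d$, to obtain the identification $\H_{\fm}^d(R) \cong \Hom_R(\omega_R, E(R/\fm))$. Since $E(R/\fm)$ is injective, the right-hand side lies in $\mathcal{I}_{\omega_R}(R)$, so $\H_{\fm}^d(R)$ is $\omega_R$-injective.

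With this identification in hand, the two halves of the statement follow by appealing to results already proved. For the maximal ideal $\fm$, Corollary 4.5 applied with $C = \omega_R$ and $\fp = \fm$ (noting $\h(\fm) = d$) gives $\pi_i(\fm, \H_{\fm}^d(R)) = \pi_i(\fm, \Hom_R(\omega_R, E(R/\fm))) = \delta_{i,d}$ immediately. For a non-maximal prime $\fq$, the implication (i)$\Longrightarrow$(ii) of Theorem 4.3, applied to the pointwise dualizing module $\omega_R$ and the $\omega_R$-injective module $\H_{\fm}^d(R)$, yields $\pi_i(\fq, \H_{\fm}^d(R)) = 0$ for every $i \neq \h(\fq)$. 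I do not foresee a serious obstacle: the computation reduces to local duality plus a direct invocation of earlier results, the only mild technical point being to confirm local duality in precisely the form above, which is why both the Cohen--Macaulay hypothesis and the existence of $\omega_R$ are used.
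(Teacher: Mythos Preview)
Your proposal is correct and matches the paper's proof essentially line for line: the paper likewise identifies $\H^d_{\fm}(R)\cong\Hom_R(\omega_R,E(R/\fm))$ via local duality \cite[Theorem 11.2.8]{BS}, then invokes Theorem~4.3 for non-maximal primes and Corollary~4.5 for $\fp=\fm$.
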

\begin{proof}
By \cite[Theorem 11.2.8]{BS}, we have $\H^d_{\fm}(R) \cong \Hom_R(\omega_R , E(R/ \fm))$, and hence $\H^d_{\fm}(R)$ is $\omega_R$-injective. Assume that $ \fq $ is a non-maximal prime ideal of $R$. Then by the Theorem 4.3, we have $ \pi_i \big(\fq, \H_{\fm}^d(R) \big) = 0 $ for all $ i \neq \h(\fq) $. Finally, by corollary 4.5, we have $ \pi_i \big(\fm, \H^d_{\fm}(R)\big) = 0 $ for all $ i \neq d $ and that $ \pi_d \big(\fm, \H^d_{\fm}(R)\big) = 1 $, as wanted.
\end{proof}
If  $ (R, \fm) $ is a Cohen-Macaulay local ring with $ \dim(R) = d $, then by \cite[Corollary 6.2.9]{BS} the only non-vanishing local cohomology of $R$ with respect to $ \fm $ is $ \H_{\fm}^d(R) $. Also, if $R$ admits a canonical module, then by \cite[Proposition 9.5.22]{EJ1}, we have $ \fd_R(\H_{\fm}^d(R)) = d  $. The following corollary describes the structure of the minimal flat resolution of $ \H_{\fm}^d(R) $.
\begin{cor}
	Let $ (R, \fm) $ be a $d$-dimensional Cohen-Macaulay local ring possessing a canonical module. The minimal flat resolution of $ \emph{\H}_{\fm}^d(R) $ is of the form \\
	\centerline{$ 0 \longrightarrow \widehat{R_{\fm}} \longrightarrow \cdots \longrightarrow \underset{\emph{\h}(\fp) = 1} \prod T_{\fp}  \longrightarrow \underset{\emph{\h}(\fp) = 0} \prod T_{\fp} \longrightarrow \emph{\H}_{\fm}^d(R) \longrightarrow 0 $.}
\end{cor}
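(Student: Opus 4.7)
The plan is to read off the minimal flat resolution of $M := \H_{\fm}^d(R)$ by combining two ingredients already at hand: Enochs' structure theorem for flat cotorsion modules (recalled in Remark 2.14) and the values of $\pi_i(\fp, M)$ computed in Corollary 4.6. First I would verify that $M$ is cotorsion. By \cite[Theorem 11.2.8]{BS}, $M \cong \Hom_R(\omega_R, E(R/\fm))$, so $M$ is $\omega_R$-injective; then for any flat $R$-module $F$, Hom-tensor adjunction gives
$$\Ext^1_R\bigl(F, \Hom_R(\omega_R, E(R/\fm))\bigr) \cong \Hom_R\bigl(\Tor^R_1(F, \omega_R), E(R/\fm)\bigr) = 0,$$
exactly as in the (i)$\Rightarrow$(ii) direction of Theorem 4.3. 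Consequently every flat module appearing in the minimal flat resolution of $M$ is itself cotorsion, so by Enochs' theorem the $i$-th term has the shape $F_i \cong \prod_{\fp} T_\fp$, in which each $T_\fp$ is the completion of a free $R_\fp$-module of rank $\pi_i(\fp, M)$.

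Next I would pin down, level by level, which primes actually contribute. Since $R$ is Cohen-Macaulay of dimension $d$, every prime satisfies $\h(\fp) + \dim(R/\fp) = d$, so $\h(\fp) = d$ happens precisely when $\fp = \fm$, and every other prime has $\h(\fp) \leq d-1$. Corollary 4.6 tells us that $\pi_i(\fm, M) = \delta_{i,d}$ and that $\pi_i(\fq, M) = 0$ for every non-maximal prime $\fq$ whenever $i \neq \h(\fq)$. For $0 \leq i \leq d-1$ the maximal ideal contributes nothing, and among non-maximal primes only those of height exactly $i$ may contribute, yielding $F_i \cong \prod_{\h(\fp) = i} T_\fp$. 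For $i = d$ only $\fm$ can appear, with rank $\pi_d(\fm, M) = 1$, so $F_d \cong \widehat{R_{\fm}}$.

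All that remains is to see that the resolution terminates in degree $d$. This is immediate from the fact that $\fd_R(M) = d$ by \cite[Proposition 9.5.22]{EJ1}: the minimal flat resolution is a direct summand of any flat resolution, so its length is at most $\fd_R(M) = d$, and the previous paragraph shows $F_d \neq 0$. Equivalently, at the level of invariants, for any prime $\fp$ and any $i > d$, Corollary 4.6 forces $\pi_i(\fp, M) = 0$: the case $\fp = \fm$ is handled by $\delta_{i,d}$, and the case $\fp \neq \fm$ by the observation that $\h(\fp) \leq d - 1 < i$. I do not anticipate a genuine obstacle here; the substantive work was already carried out in Corollary 4.6, and the present statement is essentially bookkeeping with Enochs' description of flat cotorsion modules.
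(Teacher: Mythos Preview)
Your proposal is correct and follows exactly the route the paper intends: Corollary 4.7 is stated without proof in the paper because it is meant to be read off immediately from Corollary 4.6 together with the structure theory of minimal flat resolutions of cotorsion modules recalled in Remark 2.14 and Definition 2.15, which is precisely what you do. Your explicit verification that $\H_{\fm}^d(R)$ is cotorsion (via local duality and the Hom--Tor adjunction, as in the proof of Theorem 4.3) and your height bookkeeping using the Cohen--Macaulay equality $\h(\fp) + \dim(R/\fp) = d$ fill in the details the paper leaves to the reader.
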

In the following corollary, we give another proof of \cite[Corollary 3.7]{RT}. Our approach is direct, and uses the well-known fact that the homology functor $ \Tor $ can be computed by a flat resolution.  
\begin{cor}\label{P}
	Let $(R, \fm)$ be a $d$-dimensional Cohen-Macaulay local ring. Then
	$$\emph\Tor^R_i(\emph\H^d_{\fm}(R),\emph\H^d_{\fm}(R)) \cong \left\lbrace
	\begin{array}{c l}
	\emph\H^d_{\fm}(R)\ \ & \text{ \ \ \ \ \ $i=d$,}\\
	0\ \   & \text{  \ \ $\text{ \ \ $i \neq d$}$.}
	\end{array}
	\right.$$\\
\end{cor}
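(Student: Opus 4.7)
My plan is to apply Corollary 4.7 to realize $\H^d_{\fm}(R)$ as the top of an explicit flat resolution of length $d$, and then compute the Tor groups as the homology of this resolution tensored with $\H^d_{\fm}(R)$. Since Corollary 4.7 requires the existence of a canonical module, I would first reduce to that case by passing to the $\fm$-adic completion $\widehat{R}$, which is a complete Cohen-Macaulay local ring and hence admits a canonical module. Because $\H^d_{\fm}(R)$ is Artinian, it is $\fm$-torsion and therefore naturally an $\widehat{R}$-module equal to $\H^d_{\fm\widehat{R}}(\widehat{R})$; combining this with the flatness of $\widehat{R}$ over $R$ yields the identification $\Tor^R_i(\H^d_{\fm}(R),\H^d_{\fm}(R)) \cong \Tor^{\widehat{R}}_i(\H^d_{\fm}(R),\H^d_{\fm}(R))$. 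Thus I may assume throughout that $R$ is complete with canonical module $\omega_R$.

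Under this reduction, Corollary 4.7 provides the flat resolution
$$0 \longrightarrow R \longrightarrow F_{d-1} \longrightarrow \cdots \longrightarrow F_0 \longrightarrow \H^d_{\fm}(R) \longrightarrow 0,$$
where $F_i = \prod_{\h(\fp) = i} T_{\fp}$ for $0 \leq i \leq d-1$. Deleting $\H^d_{\fm}(R)$ and applying the functor $\H^d_{\fm}(R) \otimes_R -$, the $i$-th homology of the resulting complex is exactly $\Tor^R_i(\H^d_{\fm}(R),\H^d_{\fm}(R))$. The problem therefore reduces to verifying the two claims: (a) $\H^d_{\fm}(R) \otimes_R F_i = 0$ for every $i < d$, and (b) $\H^d_{\fm}(R) \otimes_R R = \H^d_{\fm}(R)$, the latter being tautological.

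The main obstacle is claim (a). Tensor product does not commute with infinite direct products in general, so the coordinate-wise vanishing $\H^d_{\fm}(R) \otimes_R T_{\fp} = 0$ (valid because $T_{\fp}$ is an $R_{\fp}$-module with $\fp \neq \fm$) does not directly propagate to the whole product. The remedy I would use is to filter $\H^d_{\fm}(R)$ by the finitely generated submodules $N_n := (0 :_{\H^d_{\fm}(R)} \fm^n)$; each $N_n$ has finite length (being Artinian over the Artinian ring $R/\fm^n$) and hence is finitely presented over the Noetherian ring $R$, while $\H^d_{\fm}(R) = \varinjlim_n N_n$ since $\H^d_{\fm}(R)$ is $\fm$-torsion. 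Using that tensor commutes with direct limits and that tensoring with finitely presented modules commutes with arbitrary products, one obtains
$$\H^d_{\fm}(R) \otimes_R \prod_{\h(\fp) = i} T_{\fp} \ \cong \ \varinjlim_n \prod_{\h(\fp) = i} (N_n \otimes_R T_{\fp}).$$
For any $\fp$ with $\h(\fp) = i < d$ we have $\fp \neq \fm$ and hence $\fm^n \not\subseteq \fp$, so $(N_n)_{\fp} = 0$; consequently $N_n \otimes_R T_{\fp} \cong (N_n)_{\fp} \otimes_{R_{\fp}} T_{\fp} = 0$, which establishes (a).

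Combining (a) and (b), the tensored complex is concentrated in degree $d$, where it equals $\H^d_{\fm}(R)$. Reading off the homology yields $\Tor^R_i(\H^d_{\fm}(R), \H^d_{\fm}(R)) \cong \H^d_{\fm}(R)$ for $i=d$ and zero for $i \neq d$, as claimed.
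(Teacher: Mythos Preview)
Your proof is correct and follows essentially the same route as the paper: reduce to the complete case, invoke Corollary~4.7 for the minimal flat resolution, and show that tensoring with $\H^d_{\fm}(R)$ kills each term $\prod_{\h(\fp)=i} T_{\fp}$ for $i<d$ by writing $\H^d_{\fm}(R)$ as a direct limit of finite-length (hence finitely presented) submodules. Your treatment is in fact slightly cleaner in two places: you make explicit the step where tensoring a finitely presented module commutes with an infinite product (the paper leaves this implicit), and you note that after completion the top term $\widehat{R_{\fm}}$ is simply $R$, rendering the $i=d$ case tautological, whereas the paper establishes $\widehat{R_{\fm}} \otimes_R \H^d_{\fm}(R) \cong \H^d_{\fm}(R)$ via a longer chain of isomorphisms through local duality.
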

\begin{proof}
Note that $ \widehat{R} $ is a $ d $-dimensional complete Cohen-Macaulay local ring, and hence admits a canonical module $\omega _{\widehat{R}}$. The $R$-module $ \H^d_{\fm}(R) $ is Artinian by \cite[Theorem 7.1.6]{BS}, and thus naturally has a $ \widehat{R} $-module structure by \cite[Remark 10.2.9]{BS}. Hence $ \Tor^R_i(\H^d_{\fm}(R) , \H^d_{\fm}(R)) $ is Artinian for all $ i \geq 0 $ by \cite[Corollary 3.2]{KLW1}. Thus there are isomorphisms
\[\begin{array}{rl}
\Tor^R_i(\H^d_{\fm}(R),\H^d_{\fm}(R)) &\cong \Tor^R_i(\H^d_{\fm}(R),\H^d_{\fm}(R)) \otimes_R \widehat{R}  \\
&\cong \Tor^{\widehat{R}}_i(\H^d_{\fm}(R) \otimes_R \widehat{R} ,\H^d_{\fm}(R) \otimes_R \widehat{R})\\
&\cong \Tor^{\widehat{R}}_i \big(\H^d_{\fm\widehat{R}}(\widehat{R}) ,\H^d_{\fm\widehat{R}}(\widehat{R}) \big), \\
\end{array}\]
in which the second isomorphism is from \cite[Theorem 2.1.11]{EJ1}, and the last one is flat base change \cite[Theorem 4.3.2]{BS}. Also, we have the isomorphisms 
\[\begin{array}{rl}
\H^d_{\fm}(R) &\cong \H^d_{\fm}(R) \otimes_R \widehat{R} \\
&\cong \H^d_{\fm \widehat{R}}(\widehat{R})\\
&\cong \Hom_{\widehat{R}} \big(\omega _{\widehat{R}}, E_{\widehat{R}}(\widehat{R}/ \fm\widehat{R}) \big), \\
\end{array}\]
in which the first isomorphism holds because $ \H^d_{\fm}(R) $ is Artinian, the second isomorphism is the flat base change, and the last one is local duality \cite[Theorem 11.2.8]{BS}. Thus $\H^d_{\fm}(R)$ 
is a $\omega _{\widehat{R}}$-injective $ \widehat{R} $-module. Hence, by Corollary 4.7, the minimal flat resolution of $ \H_{\fm}^d(R) $, as an $ \widehat{R} $-module, is of the form \\
\centerline{$ 0 \longrightarrow \widehat{\widehat{R}_{\fm\widehat{R}}} \longrightarrow \cdots \longrightarrow \underset{\h(Q) = 1} \prod T_{Q}  \longrightarrow \underset{\h(Q) = 0} \prod T_{Q} \longrightarrow \H_{\fm}^d(R) \longrightarrow 0 $,}
in which $ T_{Q} $ is the completion of a free $ \widehat{R}_{Q} $-module with respect to $ Q\widehat{R}_{Q} $-adic topology, for $ Q \in \Spec(\widehat{R}) $. Observe that the above resolution is a flat resolution of  $\H^d_{\fm}(R)$ as an $R$-module since the modules in the above resolution are all flat $R$-modules. Therefore, we can replace $ R $ by $ \widehat{R} $, and assume that $ R $ is complete. So that, the minimal flat resolution of $ \H_{\fm}^d(R) $ is of the form \\
\centerline{$ 0 \longrightarrow \widehat{R_{\fm}} \longrightarrow \cdots \longrightarrow \underset{\h(\fp) = 1} \prod T_{\fp}  \longrightarrow \underset{\h(\fp) = 0} \prod T_{\fp} \longrightarrow \H_{\fm}^d(R) \longrightarrow 0 $,}
in which $ T_{\fp} $ is the completion of a free $ R_{\fp} $-module with respect to $ \fp R_{\fp} $-adic topology, for $ \fp \in \Spec(R) $.
Next, note that for each prime ideal $ \fp $ with $ \fp \neq \fm $, we have $ \H_{\fm}^d(R) \otimes_R \Big( \prod T_{\fp} \Big) = 0  $. Indeed, we can write
$ \H_{\fm}^d(R) = \underset{\underset{\alpha \in I} \longrightarrow}  \lim M_{\alpha}$, where $M_{\alpha}$ is a finitely generated submodule of $\H_{\fm}^d(R)$. Also $ T_{\fp} = \Hom_R\big(E(R/ \fp) , E(R/ \fp)^{(X)} \big) $ for some set $X$. Now since $ M_{\alpha} $ is of finite length by \cite[Theorem 7.1.3]{BS},  we can take an element
$ x \in \fm \smallsetminus \fq $ such that $ xM_{\alpha} = 0 $. But multiplication of $ x $ induces an automorphism on $ E(R/ \fp) $ and hence on $ \prod T_{\fp} $. Consequently, multiplication of $ x $ on $ M_{\alpha} \otimes_R \Big( \prod T_{\fp} \Big) $ is both an isomorphism and zero. Hence
$ M_{\alpha} \otimes_R \Big( \prod T_{\fp} \Big) = 0$, from which we conclude that $ \H_{\fm}^d(R) \otimes_R \Big( \prod T_{\fp} \Big) = 0  $ since tensor commutes with direct limit. Thus $ \Tor^R_i(\H^d_{\fm}(R),\H^d_{\fm}(R))  = 0 $ for $ i \neq d $. Finally, we have \\
  \[\begin{array}{rl}
  \Tor^R_d(\H^d_{\fm}(R),\H^d_{\fm}(R)) &\cong \widehat{R_{\fm}} \otimes_R \H^d_{\fm}(R) \\
  &\cong \H^d_{\fm \widehat{R_{\fm}}}(\widehat{R_{\fm}}) \\
  &\cong \Hom_{\widehat{R_{\fm}}}(\widehat{\omega_{R_{\fm}}} , E_{\widehat{R_{\fm}}}(\widehat{R_{\fm}}/ \fm \widehat{R_{\fm}}))\\
  	&\cong \Hom_{R_{\fm}}(\omega_{R_{\fm}} , E_{R_{\fm}}(R_{\fm}/ \fm R_{\fm})) \otimes_{R_{\fm}} \widehat{R_{\fm}} \\
  	&\cong \Hom_{R_{\fm}}(\omega_{R_{\fm}} , E_{R_{\fm}}(R_{\fm}/ \fm R_{\fm}))\\
  	&\cong \Hom_R(\omega_R , E(R/ \fm)) \otimes_R R_{\fm}\\
  	&\cong \Hom_R(\omega_R , E(R/ \fm) \otimes_R R_{\fm})\\
  	&\cong \Hom_R(\omega_R , E(R/ \fm))\\
  	&\cong \H^d_{\fm}(R),\\
  	\end{array}\]
  	in which the second isomorphism is the flat base change \cite[Theorem 4.3.2]{BS}, the third isomorphism is local duality \cite[Theorem 11.2.8]{BS}, and the fifth one is from \cite[Remark 10.2.9]{BS}, since $ \Hom_{R_{\fm}}(\omega_{R_{\fm}} , E_{R_{\fm}}(R_{\fm}/ \fm R_{\fm})) $ is an Artinian
  	$ R_{\fm} $-module and hence has a natural structure as an $ \widehat{R_{\fm}} $-module.
  \end{proof}
  The following theorem is a slight generalization of \cite[Theorem 3.3]{X2}. 
\begin{thm}
The following are equivalent:
\begin{itemize}
	\item[(i)]{$C$ is pointwise dualizing.}
	\item[(ii)] {If $M$ is a cotorsion $R$-module such that $C\emph{-\id}_R(M) = n < \infty$, then $M$ admits a minimal flat resolution such that $ \pi_i(\fp,M) = 0 $  for all $ \fp \in \emph{\Spec}(R) $ whenever $ \emph{\h}(\fp) \notin \{ i , ... , i + n \} $.}
\end{itemize}
\end{thm}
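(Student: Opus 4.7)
The plan is to prove (i)$\Longrightarrow$(ii) by induction on $n=C$-$\id_R(M)$, and to prove (ii)$\Longrightarrow$(i) by specializing (ii) to $\Hom_R(C,E(R/\fm))$ for each maximal ideal $\fm$ and reading off the dualizing property of $C_\fm$ from the resulting Ext-vanishing. Both directions rest on computing $\pi_i(\fp,-)$ via the formula of Definition 2.15, which requires cotorsion throughout.

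For the base case $n=0$ of (i)$\Longrightarrow$(ii), $M$ is $C$-injective and the desired vanishing is exactly the direction (i)$\Longrightarrow$(ii) of Theorem 4.3. For the inductive step, extract from any $\mathcal{I}_C$-coresolution of $M$ of length $n$ a short exact sequence $0\to M\to N\to M'\to 0$ with $N$ $C$-injective and $C$-$\id_R(M')\leq n-1$. I would first check that $M'$ is cotorsion: cotorsion is equivalent to $\Ext^i_R(F,-)=0$ for every flat $F$ and every $i\geq 1$ (iterate $0\to K\to P\to F\to 0$ with $P$ projective and $K$ flat), and $N=\Hom_R(C,I)$ is cotorsion via $\Ext^1_R(F,N)\cong\Hom_R(\Tor_1^R(F,C),I)=0$, so the long exact sequence of $\Ext_R(F,-)$ forces $\Ext^1_R(F,M')=0$. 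Now $\Ext^1_R(R_\fp,M)=0$ makes $\Hom_R(R_\fp,-)$ exact on the sequence above, and applying $\Tor^{R_\fp}_\bullet(k(\fp),-)$ yields a long exact sequence whose $k(\fp)$-dimensions read
$$\cdots\to\pi_{i+1}(\fp,M')\to\pi_i(\fp,M)\to\pi_i(\fp,N)\to\pi_i(\fp,M')\to\cdots.$$
If $\h(\fp)\notin\{i,\ldots,i+n\}$, then $\h(\fp)\neq i$ forces $\pi_i(\fp,N)=0$ by the base case applied to $N$, and $\h(\fp)\notin\{i+1,\ldots,(i+1)+(n-1)\}$ forces $\pi_{i+1}(\fp,M')=0$ by the induction hypothesis applied to $M'$; exactness gives $\pi_i(\fp,M)=0$.

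For (ii)$\Longrightarrow$(i), by Lemma 4.1 it suffices to show $C_\fm$ is dualizing for every $\fm\in\Max(R)$. Set $M=\Hom_R(C,E(R/\fm))$, which is $C$-injective (hence cotorsion) with $C$-$\id_R(M)=0$, so (ii) applied to $M$ gives $\pi_i(\fm,M)=0$ for $i\neq\h(\fm)$. Using Remark 2.16 to identify $\Hom_R(R_\fm,M)\cong\Hom_{R_\fm}(C_\fm,E_{R_\fm}(k(\fm)))$, then tensor-evaluation exactly as in the proof of (iii)$\Longrightarrow$(i) of Theorem 4.3 delivers
$$\Tor^{R_\fm}_i(k(\fm),\Hom_R(R_\fm,M))\cong\Hom_{R_\fm}(\Ext^i_{R_\fm}(k(\fm),C_\fm),E_{R_\fm}(k(\fm))),$$
which, by Matlis duality, has the same $k(\fm)$-dimension as $\Ext^i_{R_\fm}(k(\fm),C_\fm)$. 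Therefore $\Ext^i_{R_\fm}(k(\fm),C_\fm)=0$ for $i\neq\h(\fm)$, which yields $\id_{R_\fm}(C_\fm)\leq\h(\fm)<\infty$, so $C_\fm$ is dualizing.

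The main obstacle is the cotorsion-preservation for $M'$ in the inductive step, since the formula of Definition 2.15 (and hence any talk of $\pi_i$) is only available for cotorsion modules; once that is secured, the remainder is bookkeeping in the Tor long exact sequence together with the standard Matlis and tensor-evaluation identifications already used in Theorem 4.3.
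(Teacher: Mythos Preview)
Your proof is correct and follows essentially the same strategy as the paper: induction on $n$ for (i)$\Longrightarrow$(ii) with base case Theorem 4.3, and specialization to $\Hom_R(C,E(R/\fm))$ for (ii)$\Longrightarrow$(i). The only noteworthy difference is in the inductive step, where you truncate an arbitrary $\mathcal{I}_C$-coresolution of $M$ to produce $0\to M\to N\to M'\to 0$, whereas the paper obtains this sequence from the (injective) $\mathcal{I}_C$-preenvelope of $M$ and then invokes the Auslander class $\mathcal{A}_C(R)$ and Theorem 2.9(i) to compute $C$-$\id_R$ of the cokernel; your route is slightly more direct and avoids that detour, and your explicit justification that $M'$ is cotorsion (via $\Ext^i_R(F,M)=0$ for all $i\geq 1$) fills in a step the paper leaves implicit.
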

\begin{proof}
(i) $ \Longrightarrow $	(ii). We use induction on $n$. If $ n = 0 $, then we are done by Theorem 4.3. Now assume inductively that $ n > 0 $ and the case $ n $ ie settled. Fix a prime ideal $\fp$ of $R$. Assume that $ M $ is a cotorsion $R$-module with $ C $-$ \id_R(M) = n + 1 $. Hence $ M \in \mathcal{A}_C(R) $, and so the
$ \mathcal{I}_C $-preenvelope of $M$ is injective by \cite[Corollary 2.4(b)]{TW}. Thus there exists an exact sequence \\
\centerline{$ 0 \rightarrow M \rightarrow \Hom_R(C , I) \rightarrow L \rightarrow 0$, $ (*) $}
in which $I$ is injective, and $ L = \coker(M \rightarrow \Hom_R(C , I)) $. Note that $ L $ is cotorsion since both $M$ and $ \Hom_R(C , I) $ are cotorsion. Also, since both $ M$ and  $ \Hom_R(C , I) $ are in $\mathcal{A}_C(R) $, we have $ L \in \mathcal{A}_C(R) $, and therefore
$ \Tor^R_1(C,L) = 0 $. On the other hand $ C \otimes_R \Hom_R(C , I) \cong I$, by \cite[Theorem 3.2.11]{EJ1}. Hence application of $ C \otimes_R - $ on $ (*) $ yields an exact sequence \\
\centerline{$ 0 \rightarrow C \otimes_R M \rightarrow I \rightarrow C \otimes_R L \rightarrow 0$.}
By Theorem 2.9(i), we have $ \id_R(C \otimes_R M) = n + 1 $. Therefore $ \id_R(C \otimes_R L) = n $, whence $ C $-$ \id_R(L) = n $. Now induction hypothesis applied to $ \Hom_R(C , I) $ and $L$ yields that $ \pi_i \big(\fp, \Hom_R(C , I)\big) = 0  $ for all $ i \neq \h(\fp) $, and that $ \pi_i(\fp,L) = 0 $ fo all $ \h(\fp) \notin \{ i , ... , i + n \} $. Note that $ \Ext^1_R(R_{\fp} , M) = 0 $ since $M$ is cotorsion. Hence the exact sequence $ (*) $ yields an exact sequence \\
\centerline{$ 0 \rightarrow \Hom_R(R_{\fp} ,M) \rightarrow \Hom_R(R_{\fp} ,\Hom_R(C , I)) \rightarrow \Hom_R(R_{\fp} ,L) \rightarrow 0$,}
and the later exact sequence, by applying $ k(\fp) \otimes_{R_{\fp}} - $, yields the long exact sequence\\
\centerline{$ \cdots \rightarrow \Tor^{R_{\fp}}_{i+ 1} \big(k(\fp) , \Hom_R(R_{\fp} , \Hom_R(C , E)) \big) \rightarrow \Tor^{R_{\fp}}_{i+ 1} \big(k(\fp) , \Hom_R(R_{\fp} , L) \big) \rightarrow$}
\centerline{$\Tor^{R_{\fp}}_i \big(k(\fp) , \Hom_R(R_{\fp} , M) \big) \rightarrow \Tor^{R_{\fp}}_i \big(k(\fp) , \Hom_R(R_{\fp} , \Hom_R(C , E)) \big) \rightarrow \cdots $.}
From the above long exact sequence, it follows that $ \Tor^{R_{\fp}}_i \big(k(\fp) , \Hom_R(R_{\fp} , M) \big) = 0 $ for all $ \h(\fp) \notin \{ i , ... , i + n + 1 \} $, as wanted. This completes the inductive step. \\
 (ii) $ \Longrightarrow $ (i). Let $ \fm $ be a maximal ideal of $R$. Now $ \Hom_R(C,E(R/ \fm)) $ is $C$-injective and hence by assumption $ \pi_i \big(\fm, \Hom_R(C,E(R/ \fm))\big) = 0  $ for all $ i \neq \h(\fm) $. Now by the same argument as in the proof of Theorem 4.3, we have $ \Ext^i_{R_{\fm}}(k(\fm) , C_{\fm}) = 0 $ for all $ i \neq \h(\fm) $, whence $C_{\fm}$ is dualizing for $R_{\fm}$.
\end{proof}

\begin{cor}
The following statements hold true:
\begin{itemize}
	\item[(i)]{If $C$ is pointwise dualizing, then  $ C\emph{-\id}_R(F(M)) \leq C\emph{-\id}_R(M)$ for any cotorsion $R$-module $M$.}
	\item[(ii)] {If $ C\emph{-\id}_R(F(M)) \leq C\emph{-\id}_R(M)$ for any $R$-module $M$, then $C$ is pointwise dualizing.}
\end{itemize}	
\end{cor}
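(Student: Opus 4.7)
The plan for (i) is to feed the main technical result of this section, Theorem 4.9, into the Enochs structure theorem for flat cotorsion modules. Setting $n = C\text{-}\id_R(M)$ (which I may assume finite), Theorem 4.9 applied with $i=0$ gives $\pi_0(\fp, M) = 0$ whenever $\h(\fp) \notin \{0, \ldots, n\}$, so the decomposition $F(M) \cong \prod_{\fp} T_{\fp}^{(\pi_0(\fp, M))}$ involves only primes of height at most $n$. Lemma 4.1 provides $C\text{-}\id_R(T_{\fp}) = \h(\fp) \leq n$ for each such $T_{\fp}$. Because $C$ is finitely generated over the Noetherian ring $R$, and hence finitely presented, $C \otimes_R -$ commutes with arbitrary direct products; combining this with the fact that each factor of a product is canonically a direct summand, and that injective dimension is preserved under direct products (since $\Ext^{>n}(-, -)$ commutes with products in the second argument), I conclude $C\text{-}\id_R(F(M)) \leq n$.

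For (ii), fix $\fm \in \Max(R)$, set $d := \depth(R_{\fm})$, and consider $M_0 = \Hom_R(C, E(R/\fm))$, which is $C$-injective. Let $\cdots \to F_i \to \cdots \to F_0 \to M_0 \to 0$ be its minimal flat resolution (valid since $M_0$ is cotorsion) and $K_i$ the $i$-th syzygy. I plan to prove simultaneously by induction on $i \geq 0$ that $C\text{-}\id_R(F_i) \leq i$ and $C\text{-}\id_R(K_i) \leq i+1$. The base $C\text{-}\id_R(F_0) \leq 0$ is immediate from the hypothesis. In the inductive step, $K_i$ lies in $\mathcal{A}_C(R)$ by the two-out-of-three property (since $F_i$ is flat, hence in $\mathcal{A}_C(R)$, and $K_{i-1}$ has finite $C$-injective dimension), so applying $C \otimes_R -$ to $0 \to K_i \to F_i \to K_{i-1} \to 0$ preserves exactness; the standard Ext long exact sequence then propagates the bounds on $\id_R(C \otimes_R F_i)$ and $\id_R(C \otimes_R K_{i-1})$ to $\id_R(C \otimes_R K_i) \leq i+1$. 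Applying the hypothesis to $K_i$ gives $C\text{-}\id_R(F_{i+1}) = C\text{-}\id_R(F(K_i)) \leq i+1$.

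The final step is a Bass-number computation at level $i=d$: using $\Hom_R(R_{\fm}, M_0) \cong \Hom_{R_{\fm}}(C_{\fm}, E_{R_{\fm}}(k(\fm)))$ together with Hom-tensor evaluation, as in the proof of Corollary 4.5, one finds $\pi_i(\fm, M_0) = \mu^i_{R_{\fm}}(\fm R_{\fm}, C_{\fm})$, and in particular $\pi_d(\fm, M_0) = r_{R_{\fm}}(C_{\fm}) \geq 1$. Hence $T_{\fm}$ appears as a direct summand of $F_d = \prod_{\fp} T_{\fp}^{(\pi_d(\fp, M_0))}$. The product-stability argument from (i) then yields $C\text{-}\id_R(T_{\fm}) \leq d < \infty$; since $\fm$ was arbitrary, Lemma 4.1 (vii)$\Longrightarrow$(i) delivers that $C$ is pointwise dualizing. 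I expect the main obstacle to be the inductive propagation of the $C$-injective dimension bounds through the syzygies, which requires careful bookkeeping to keep all relevant modules inside $\mathcal{A}_C(R)$ so that $C \otimes_R -$ remains exact on each defining short exact sequence.
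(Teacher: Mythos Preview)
Your argument for part (i) is essentially the paper's own: use Theorem~4.9 at $i=0$ to restrict the primes appearing in $F(M)\cong\prod_{\fp}T_{\fp}$ to those with $\h(\fp)\le n$, then invoke Lemma~4.1. You supply the product/summand bookkeeping that the paper leaves implicit, which is fine.

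For part (ii) your route is correct but genuinely different from the paper's. The paper fixes $\fm\in\Max(R)$, chooses a maximal $R$-sequence $\mathbf{x}\subset\fm$, and shows $C\text{-}\fd_R(C/\mathbf{x}C)<\infty$; dualizing against an injective cogenerator $E$ gives $(C/\mathbf{x}C)^{\vee}$ of finite $C$-injective dimension, and a \emph{single} application of the hypothesis to its flat cover $F$ yields $C\text{-}\id_R(F)<\infty$, whence $C\text{-}\fd_R(F^{\vee})<\infty$. Since $E(R/\fm)$ splits off $F^{\vee}$ via the embeddings $C/\mathbf{x}C\hookrightarrow (C/\mathbf{x}C)^{\vee\vee}\hookrightarrow F^{\vee}$, Lemma~4.1 finishes. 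By contrast, you walk up the minimal flat resolution of $\Hom_R(C,E(R/\fm))$, applying the hypothesis once at each syzygy to propagate the bound $C\text{-}\id_R(F_i)\le i$; the Bass-number identity $\pi_i(\fm,M_0)=\mu^i_{R_{\fm}}(\fm R_{\fm},C_{\fm})$ (exactly the computation in Corollary~4.5) then exhibits $T_{\fm}$ as a summand of $F_d$, and Lemma~4.1(vii)$\Rightarrow$(i) concludes. The paper's double-dual trick is slicker and uses the hypothesis only once; your iterative argument is more hands-on, stays entirely within the minimal-flat-resolution framework developed in the section, and actually produces the sharp bound $C\text{-}\id_R(T_{\fm})\le\depth(R_{\fm})$ that Lemma~4.1 predicts. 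Your anticipated obstacle---keeping all syzygies in $\mathcal{A}_C(R)$ so that $C\otimes_R-$ stays exact---is handled exactly as you outline, since each $F_i$ is flat and each $K_{i-1}$ has finite $C$-injective dimension by induction.
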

\begin{proof}
(i). Assume that $M$ is a cotorsion $R$-module. If $C$-$\id_R(M) = \infty$, then we are done. So assume that $C$-$\id_R(M) = n < \infty$. Then by Theorem 4.9, we have $ F(M) = \prod T_{\fp} $ where $ 0 \leq \h(\fp) \leq n $. Now the result follows by Lemma 4.1.

(ii). Assume that $ \fm $ is a maximal ideal of $R$. We have to show that $ C_{\fm} $ is dualizing for $ R_{\fm} $. Assume that \textbf{x} is a maximal $R$-sequence in $\fm$. Then $ \fd_R(R/ \textbf{x}R) < \infty $, and $ \Ass_R(C/ \textbf{x}C) = \{\fm \} $ since \textbf{x} is also a maximal $ C $-sequence. Hence we have the equalities
 \[\begin{array}{rl}
 C\emph{-}\fd (C /\textbf{x}C) &= \fd_R(\Hom_R(C, C /\textbf{x}C)) \\
 &=\fd_R(\Hom_R(C, C \otimes_R R /\textbf{x}R)) \\
 &=\fd_R(R /\textbf{x}R)\\
 &< \infty,\\
 \end{array}\]
 in which the first equality is from Theorem 2.9(ii), and the third one holds because  $ R/ \textbf{x}R \in \mathcal{A}_C(R) $. Assume that $E$ is an injective cogenerator. Set $ (-)^{\vee} = \Hom_R(-, E)$. Then $ C $-$ \id_R((C /\textbf{x}C)^{\vee}) < \infty $ by Lemma 2.10(ii). Now if $F$ is the flat cover of $ (C /\textbf{x}C)^{\vee} $, then by assumption, we have $ C $-$ \id_R(F) < \infty $. Therefore, we have $ C $-$ \fd_R(F^{\vee}) < \infty $ by Lemma 2.10(i).  Next, note that we have \\
 \centerline{$ C /\textbf{x}C \hookrightarrow (C /\textbf{x}C)^{\vee \vee} \hookrightarrow F^{\vee}$.}
 Hence, the injective envelope of $ C /\textbf{x}C $ is a direct summand of $ F^{\vee} $. Thus, in fact, $ E(R/ \fm) $ is a direct summand of $ F^{\vee} $, since $ R/ \fm \hookrightarrow C /\textbf{x}C $. It follows that $ C $-$ \fd_R(E(R/ \fm)) < \infty $, and hence we are done by Lemma 4.1, since $\fm$ was arbitrary.
\end{proof}

\textbf{Acknowledgments.} We thank the referee for very careful reading of the manuscript and also for his/her useful
suggestions.
 		
\bibliographystyle{amsplain}

\end{document}